\theoremstyle{plain}
\newtheorem{theorem}{Theorem}
\newtheorem{lemma}[theorem]{Lemma}
\newtheorem{corollary}[theorem]{Corollary}
\newtheorem{proposition}[theorem]{Proposition}
\theoremstyle{definition}
\newtheorem{conjecture}[theorem]{Conjecture}
\theoremstyle{remark}
\newcommand{\half}{\frac{1}{2}}
\newcommand{\wo}{\backslash}
\def\bal#1\eal{\begin{align*}#1\end{align*}}
\newcommand{\inv}{^{-1}}
\DeclarePairedDelimiter{\floor}{\lfloor}{\rfloor}
\DeclarePairedDelimiter{\ceil}{\lceil}{\rceil}
\begin{document}
\title{Unimodality of Partitions in Near-Rectangular Ferrers Diagrams}
\author{Samuel Zbarsky\\
\small Carnegie Mellon University\\
\small\tt sa\_zbarsky@yahoo.com\\
}
\date{}
\maketitle

\section{Abstract}
We look at the rank generating function $G_\lambda$ of partitions inside the Ferrers diagram of some partition $\lambda$, investigated by Stanton in 1990, as well as a closely related problem investigated by Stanley and Zanello in 2013. We show that $G_\lambda$ is not unimodal for a larger class of 4-part partitions than previously known, and also that if the ratios of parts of $\lambda$ are close enough to 1 (depending on how many parts $\lambda$ has), or if the first part is at least half the size of $\lambda$, then $G_\lambda$ is unimodal.

\section{Introduction}
Given partition $\lambda$ with $\lambda=(\lambda_1,\ldots,\lambda_b)$, the \emph{length} of $\lambda$ is $b$ and the \emph{size} of $\lambda$, denoted $|\lambda|$, is $\sum \lambda_i$. Given partitions $\mu$ and $\lambda$, we say that $\mu\subseteq\lambda$ if the Ferrers diagram of $\mu$ fits inside the Ferrers diagram of $\lambda$. For any partition $\lambda$, we look at the set of partitions $\mu\subseteq\lambda$, ordered by the relation above, and let $G_\lambda$ be the rank-generating function of this poset (i.e.\ $G_\lambda(q)=\sum a_nq^n$ where $a_n$ is the number of partitions $\mu\subseteq\lambda$ of size $n$). We are interested in understanding when the coefficients of $G_\lambda$ are unimodal, i.e.\ first weakly increasing and then weakly decreasing. This question was considered by Stanton~\cite{Stanton1990}.

Let $b$ be the length of $\lambda$. In the case that $\lambda=(a,\ldots,a)$, we get that $G_\lambda(q)$ is the Gaussian binomial coefficient $G_\lambda(q)=\binom{a+b}{b}_q$. There are many proofs that these are unimodal symmetric (for instance, \cite{Sylvester1973,Proctor1982,OHara1990,Zeilberger1989}). This fact will be used many times in the proof of our main result.

We can also look at the question above, but require that both $\lambda$ and $\mu$ have distinct nonzero parts. We call this rank generating function $F_\lambda(q)$. The unimodality of $F_\lambda(q)$ was considered by Stanley and Zanello~\cite{StanleyZanello2013}. For this version, the partition $\lambda=(b,b-1,\ldots,1)$ gives the generating function
\[
F_\lambda(q)=\prod_{i=1}^b (1+q^b)
\]
which is unimodal symmetric (see, for instance, \cite{Proctor1982}). Alpoge~\cite{Alpoge2013} proved that for $n\gg b$, if $\lambda=(n,n-1,\ldots,n-b+1)$, then $F_\lambda$ is unimodal. We will mainly be concerned with Stanton's problem, but some of our results will also extend to the Stanley-Zanello version. For both problems, it is proven by Stanton and Stanley-Zanello respectively that any partition with at most 3 parts has a unimodal generating function, and that there are infinite families of partitions with 4 parts that do not.

In section~\ref{sec: genfunc}, we will derive a form for $F_\lambda$ and $G_\lambda$ when the length of $\lambda$ is fixed and show that $F_\lambda$ and $G_\lambda$ are unimodal if $2\lambda_1\ge |\lambda|$. In section~\ref{sec: 4parts}, we will prove that for a reasonable notion of positive density, for both versions of the problem, there is a positive density of partitions with 4 parts that have nonunimodal generating functions, giving larger classes of nonunimodal partitions than were known before. In sections~\ref{sec: ascenddescend} and~\ref{sec: concave}, we will prove that partitions with 5 or more parts that are near-rectangular (in a sense to be defined later in the paper) and with $|\lambda|\gg b$ have unimodal $G_\lambda$. Finally, in section~\ref{sec: conjectures}, we present some conjectures that are supported by the results in this paper and by computational data.

\section{Generating Function}\label{sec: genfunc}
We will use $[n]$ to denote the set $\{1,2,\ldots,n\}$. We begin by finding an expression for $G_\lambda(q)$ when $\lambda$ has a fixed number of parts $b$.

If $\lambda$ is a partition, denote by $G^i_\lambda$ the size generating function for the number of partitions $\mu\subseteq\lambda$, giving the first part of the partition the weight $i$ (for instance, the partition $(4,2,1)$ will contribute to the $q^{15}$ coefficient of $G^3_\lambda$ since 3(4)+2+1=15). Note that $G^1_\lambda=G_\lambda$. Define $F^i_\lambda$ similarly.

Also, denote by $\bar{\lambda}$ the partition $\lambda$ with the first part removed. We consider generating functions of the form $G^i_{\lambda}$. Let $\lambda$ have $b$ parts, for $b$ fixed. For any $A\subseteq [b]$ and any $1\le k\le b$, we define $f^\lambda_A(k)$ and $g_A(k)$ as follows: if $k\ge \min(A)$, define $f^\lambda_A(k)=\lambda_{b+1-\max(A\cap [k])}+1$. Otherwise, define $f^\lambda_A(k)=0$.

Define $g_A(k)=k-\max(A\cap [k])+1$. For convenience, here and in the formula below, we define $\max(\emptyset)=1$.
\begin{proposition}
\begin{equation}\label{Gigenfunc}
G^i_{\lambda}(q)=\sum_{A\subseteq [b]}G^i_{\lambda, A}(q)
\end{equation}
where
\[
G^i_{\lambda, A}(q)=(-1)^{|A|}q^{\displaystyle\left(\sum_{k=1}^{b-1} f^\lambda_A(k)\right)+if^\lambda_A(b)}\prod_{k=1}^{\max(A)-1} \frac{1}{1-q^{g_A(k)}}\cdot\prod_{k=\max(A)}^{b}\frac{1}{1-q^{g_A(k)+i-1}}.
\]
\end{proposition}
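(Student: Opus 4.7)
The plan is to prove the identity by induction on the number of parts $b$ of $\lambda$. The base case $b=1$ is immediate: the right-hand side has only two terms, $A = \emptyset$ and $A = \{1\}$, contributing $\frac{1}{1-q^i}$ and $-\frac{q^{i(\lambda_1+1)}}{1-q^i}$ respectively, and their sum equals $G^i_{(\lambda_1)}(q) = \frac{1-q^{i(\lambda_1+1)}}{1-q^i}$.

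The main tool for the inductive step is the recurrence
\[
G^i_\lambda(q) \;=\; \frac{1}{1-q^i}\Bigl[G^{i+1}_{\bar\lambda}(q) \;-\; q^{i(\lambda_1+1)}\, G^1_{\bar\lambda}(q)\Bigr],
\]
which I would derive by summing out the largest part $\mu_1$. For each fixed tail $(\mu_2,\ldots,\mu_b) \subseteq \bar\lambda$, the part $\mu_1$ ranges over $[\mu_2,\lambda_1]$ (the upper bound $\mu_2 \le \lambda_1$ being automatic from $\mu_2 \le \lambda_2 \le \lambda_1$), contributing the finite geometric sum $(q^{i\mu_2} - q^{i(\lambda_1+1)})/(1-q^i)$. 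Packaging $q^{i\mu_2}$ with the tail weight produces $G^{i+1}_{\bar\lambda}$, while the constant factor $q^{i(\lambda_1+1)}$ pulls out $q^{i(\lambda_1+1)}G^1_{\bar\lambda}$.

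To conclude, I would split the proposed sum $\sum_{A \subseteq [b]} G^i_{\lambda,A}$ according to whether $b \in A$, writing each $A$ uniquely as $\bar A$ or $\bar A \cup \{b\}$ with $\bar A \subseteq [b-1]$. It then suffices to establish the two term-by-term identities
\[
G^i_{\lambda,\bar A}(q) \;=\; \frac{G^{i+1}_{\bar\lambda,\bar A}(q)}{1-q^i}, \qquad G^i_{\lambda,\bar A\cup\{b\}}(q) \;=\; -\frac{q^{i(\lambda_1+1)}\, G^1_{\bar\lambda,\bar A}(q)}{1-q^i},
\]
since summing over $\bar A$ and applying the inductive hypothesis recovers the recurrence. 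Both reduce to direct verification from the definitions. The key compatibilities are that $\bar\lambda_j = \lambda_{j+1}$ gives $f^{\bar\lambda}_{\bar A}(k) = f^\lambda_{\bar A}(k)$, and that $b \notin \bar A$ gives $f^\lambda_{\bar A}(b-1) = f^\lambda_{\bar A}(b)$, which together reconcile the exponents of $q$ under the shift $i \mapsto i+1$. For the first identity, the second product for $G^i_{\lambda,\bar A}$ runs one step further (up to $k = b$) than that for $G^{i+1}_{\bar\lambda,\bar A}$, with exponents $i, i+1, \ldots$ vs.\ $i+1, i+2, \ldots$, and the missing bottom factor $\frac{1}{1-q^i}$ is supplied precisely by the prefactor. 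For the second identity, adjoining $b$ to $\bar A$ flips the sign via $(-1)^{|\bar A|+1}$, produces the factor $\frac{1}{1-q^i}$ because $g_{\bar A \cup \{b\}}(b) = 1$, and adds $i(\lambda_1+1)$ to the exponent via $f^\lambda_{\bar A\cup\{b\}}(b) = \lambda_1+1$. The main obstacle is just careful bookkeeping through the case analysis, especially in the edge case $\bar A = \emptyset$; no genuinely new idea is needed beyond the recurrence.
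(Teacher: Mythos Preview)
Your proposal is correct and follows essentially the same approach as the paper: induction on $b$ with the same base case, the same recurrence $G^i_\lambda=(1-q^i)^{-1}\bigl(G^{i+1}_{\bar\lambda}-q^{i(\lambda_1+1)}G^1_{\bar\lambda}\bigr)$, and the same pair of term-by-term identities matching $A=\bar A$ and $A=\bar A\cup\{b\}$. The only cosmetic difference is that the paper phrases the recurrence via a bijection $\phi(\mu)=(\mu_1+1,\mu_2,\ldots)$ rather than by summing the geometric series in $\mu_1$, which amounts to the same computation.
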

The idea for the inductive  step in the proof below is based on Lemma~1 from \cite{Stanton1990} (where it is only used once).
\begin{proof}
We induct on $b$. For $b=1$,
\[
G^i_{\lambda}(q)=1+q^i+\ldots+q^{i\lambda_1}=\frac{1-q^{i(\lambda_1+1)}}{1-q^i}=\frac{1}{1-q^i}-\frac{q^{i(\lambda_1+1)}}{1-q^i},
\]
which matches \eqref{Gigenfunc}.

Assume $b>1$. If $\mu\subseteq\lambda$ with $\mu_1<\lambda_1$, then we let $\phi(\mu)=(\mu_1+1,\mu_2,\ldots,\mu_b)$. If $\nu\subseteq\lambda$ with $\nu_1>\nu_2$, we let $\phi\inv(\nu)=(\nu_1-1,\nu_2,\ldots,\nu_b)$. For any $n$, clearly $\phi$ gives a bijection between partitions $\mu\subseteq\lambda$ with $|\mu|=n-1$ and $\mu_1<\lambda_1$ and partitions $\nu\subseteq\lambda$ with $|\nu|=n$ and $\nu_1=\nu_2$. Thus taking $(1-q^i)G^i_{\lambda}(q)$ gives a lot of cancellation. Specifically, we get
\begin{align}
(1-q^i)G^i_{\lambda}(q)&=G^{i+1}_{\bar{\lambda}}(q)-q^i(q^{i\lambda_1})G^1_{\bar{\lambda}}(q) \label{(1-q)G}\\
       G^i_{\lambda}(q)&=\frac{1}{(1-q^i)}G^{i+1}_{\bar{\lambda}}(q)-\frac{1}{(1-q^i)}(q^{i(\lambda_1+1)})G^1_{\bar{\lambda}}(q) \label{recursion}
\end{align}
where the first term on the right side of \eqref{(1-q)G} corresponds to $\nu$ with $\nu_1=\nu_2$, while the second term corresponds to $\mu$ with $\mu_1=\lambda_1$.

It is not difficult (with care) to verify that for $A\subseteq [b-1]$,
\[
G^i_{\lambda, A}(q)=\frac{1}{(1-q^i)}G^{i+1}_{\bar{\lambda}, A}(q)
\]
and
\[
G^i_{\lambda, A\cup\{b\}}(q)=-\frac{q^{i(\lambda_1+1)}}{(1-q^i)}G^1_{\bar{\lambda}, A}(q),
\]
so by $\eqref{recursion}$, we get
\bal
G^i_{\lambda}(q)&=\frac{1}{(1-q^i)}G^{i+1}_{\bar{\lambda}}(q)-\frac{1}{(1-q^i)}(q^{i(\lambda_1+1)})G^1_{\bar{\lambda}}(q)\\
                &=\sum_{A\subseteq [b-1]}\frac{1}{(1-q^i)}G^{i+1}_{\bar{\lambda}, A}(q)-\sum_{A\subseteq [b-1]}\frac{q^{i(\lambda_1+1)}}{(1-q^i)}G^{1}_{\bar{\lambda}, A}(q)\\
							  &=\sum_{B\subseteq [b]} G^i_{\lambda, B}(q).
\eal
\end{proof}

We can substitute $i=1$ (which is the case we are really interested in).
\begin{corollary}
\begin{equation}\label{Ggenfunc}
G_{\lambda}(q)=\sum_{A\subseteq [b]}\left((-1)^{|A|}q^{\sum_{k=1}^{b} f^\lambda_A(k)}\prod_{k=1}^{b} \frac{1}{1-q^{g_A(k)}}\right)
\end{equation}
\end{corollary}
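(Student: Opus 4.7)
The corollary is a direct specialization of the proposition at $i = 1$, so the plan is straightforward substitution rather than new argument. I would take $G_\lambda(q) = G^1_\lambda(q)$ by definition and apply \eqref{Gigenfunc} verbatim with $i = 1$, then simplify each factor of the summand $G^i_{\lambda, A}(q)$ separately.

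First I would handle the exponent of $q$: at $i=1$ the quantity $\left(\sum_{k=1}^{b-1} f^\lambda_A(k)\right) + i f^\lambda_A(b)$ collapses to $\sum_{k=1}^{b} f^\lambda_A(k)$, matching the claimed exponent. Next I would handle the two products. With $i = 1$ the shift in the second product vanishes: $g_A(k) + i - 1 = g_A(k)$. Consequently the two factors
\[
\prod_{k=1}^{\max(A)-1} \frac{1}{1-q^{g_A(k)}} \quad\text{and}\quad \prod_{k=\max(A)}^{b}\frac{1}{1-q^{g_A(k)+i-1}}
\]
concatenate (their index ranges partition $[b]$) into the single product $\prod_{k=1}^{b}\frac{1}{1-q^{g_A(k)}}$. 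Combining with the unchanged sign $(-1)^{|A|}$ and summing over $A \subseteq [b]$ gives precisely \eqref{Ggenfunc}.

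There is essentially no obstacle; the only point requiring a moment of care is the edge case $A = \emptyset$, where the convention $\max(\emptyset) = 1$ makes the first product empty (index range $1$ to $0$) and forces the second product to run over all of $[b]$. Since the $i=1$ substitution kills the $i-1$ shift in any event, the empty case collapses to $\prod_{k=1}^{b}\frac{1}{1-q^{g_\emptyset(k)}}$, consistent with the formula asserted in the corollary. Hence no further work is needed beyond recording the substitution.
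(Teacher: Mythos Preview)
Your proposal is correct and matches the paper's own approach: the paper simply records ``We can substitute $i=1$'' and states the corollary, and your argument is exactly this substitution carried out explicitly. The only difference is that you spell out the collapse of the exponent and the concatenation of the two products (including the $A=\emptyset$ edge case), whereas the paper leaves these as self-evident.
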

Note that for a fixed $b$, this gives us $G_{\lambda}(q)$ as a sum of $2^b$ terms which are simple to compute given $\lambda$.

To get the generating function for distinct parts, we note that
\begin{equation}\label{Fgenfunc}
F_{\lambda}(q)=1+\sum_{c=1}^bq^{\binom{c+1}{2}}G_{(\lambda_1-c,\ldots,\lambda_c-1)}(q)
\end{equation}
where $c$ above represents the number of nonzero parts in a partition.

Observe that $G_{\lambda}(q)$ is unimodal if and only if the coefficients of $(1-q)G_{\lambda}(q)$ go from being nonnegative to being nonpositive.
We can also use $\eqref{(1-q)G}$ with $i=1$ to obtain directly the sign of coefficients when $|\bar{\lambda}|\le\lambda_1$.
\begin{corollary}\label{Gfirstlarge}
If $2\lambda_1\ge|\lambda|$, then $G_\lambda$ is unimodal
\end{corollary}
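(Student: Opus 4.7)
Setting $i=1$ in equation~\eqref{(1-q)G} gives
\[(1-q)G_\lambda(q) = G^2_{\bar\lambda}(q) - q^{\lambda_1+1}G_{\bar\lambda}(q),\]
so by the observation immediately preceding the corollary, it suffices to verify that the right-hand side has coefficients that are first nonnegative and then nonpositive. Both $G^2_{\bar\lambda}(q)$ and $G_{\bar\lambda}(q)$ are polynomials with nonnegative coefficients: the former is supported on degrees $[0,|\bar\lambda|+\lambda_2]$ (the maximum $|\mu|+\mu_1$ being attained at $\mu=\bar\lambda$), while $q^{\lambda_1+1}G_{\bar\lambda}(q)$ is supported on $[\lambda_1+1,\,\lambda_1+1+|\bar\lambda|]$.

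The hypothesis $\lambda_1\ge|\bar\lambda|$ handles the easy ranges. For $n\le\lambda_1$, the subtracted term contributes zero, giving $[q^n](1-q)G_\lambda(q) = [q^n]G^2_{\bar\lambda}(q)\ge 0$; for $n>|\bar\lambda|+\lambda_2$, the first term vanishes and $[q^n](1-q)G_\lambda(q) = -[q^{n-\lambda_1-1}]G_{\bar\lambda}(q)\le 0$. The question then concerns the possibly nonempty overlap $n\in[\lambda_1+1,\,|\bar\lambda|+\lambda_2]$.

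To analyze the overlap, I would split each factor by the first part of $\mu$. Writing $G^i_{\bar\lambda}(q)=\sum_{j=0}^{\lambda_2} q^{ij}G_{\tau^{(j)}}(q)$, where $\tau^{(j)}=(\min(j,\lambda_3),\ldots,\min(j,\lambda_b))$, rewrites the difference as
\[(1-q)G_\lambda(q) = \sum_{j=0}^{\lambda_2} q^{2j}\bigl(1-q^{\lambda_1+1-j}\bigr)G_{\tau^{(j)}}(q).\]
The $j$-th summand's positive part sits on $[2j,\,2j+|\tau^{(j)}|]$ and its negative part on $[j+\lambda_1+1,\,j+\lambda_1+1+|\tau^{(j)}|]$. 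The estimate $j+|\tau^{(j)}|\le\lambda_2+(|\bar\lambda|-\lambda_2)=|\bar\lambda|\le\lambda_1$ (using the hypothesis in the last inequality) shows that within each individual summand the positive part strictly precedes the negative part.

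The main obstacle is combining these signed contributions across $j$, since in the overlap region positives coming from summands with large $j$ can mix with negatives coming from summands with small $j$. To resolve this I would construct a combinatorial injection establishing $[q^n]G^2_{\bar\lambda}(q)\le[q^{n-\lambda_1-1}]G_{\bar\lambda}(q)$ for all $n\ge\lambda_1+1$. Given $\mu\subseteq\bar\lambda$ with $|\mu|+\mu_1=n$, the idea is to produce $\nu\subseteq\bar\lambda$ with $|\nu|=n-\lambda_1-1$ by canonically deleting exactly $\lambda_1+1-\mu_1\ge 1$ boxes from $\mu$ (for instance by peeling removable corners from the bottom-right upward). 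Verifying that this rule always yields a valid partition contained in $\bar\lambda$ and is injective will be the most delicate step of the argument.
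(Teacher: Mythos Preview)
Your setup follows the paper's (one-line) argument exactly: set $i=1$ in \eqref{(1-q)G} and read off signs. You are also right that the overlap region $[\lambda_1+1,\,|\bar\lambda|+\lambda_2]$ is where the work lies, and your $j$-decomposition is correct and potentially useful.

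The genuine gap is in your last paragraph. The inequality you set out to prove by injection,
\[
[q^n]G^2_{\bar\lambda}(q)\ \le\ [q^{\,n-\lambda_1-1}]G_{\bar\lambda}(q)\qquad\text{for all }n\ge\lambda_1+1,
\]
is \emph{false} under the hypothesis $\lambda_1\ge|\bar\lambda|$. Take $\lambda=(8,4,4)$, so $\bar\lambda=(4,4)$ and $2\lambda_1=|\lambda|=16$. At $n=9=\lambda_1+1$ the partitions $\mu\subseteq(4,4)$ with $|\mu|+\mu_1=9$ are $(4,1)$ and $(3,3)$, so the left-hand side equals $2$, while the right-hand side is $[q^0]G_{(4,4)}=1$. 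Hence no injection of the type you describe can exist. Concretely, the coefficients of $(1-q)G_\lambda$ here satisfy $[q^9]=1$, $[q^{10}]=0$, $[q^{11}]=-1$: unimodality still holds, but the sign change does not occur at $n=\lambda_1+1$.

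So the step you flagged as ``most delicate'' is not merely delicate---it is the wrong target. What is needed is that the sequence $[q^n](1-q)G_\lambda$ changes sign at most once on the overlap interval, not that it is nonpositive throughout. Your $j$-decomposition could still be a good starting point (e.g.\ observe that for fixed $n\ge\lambda_1+1$, positive contributions to $[q^n]$ come only from indices $j\ge n-\lambda_1$ while negative ones come only from $j\le n-\lambda_1-1$), but the combination across $j$ requires a different idea than the box-deletion injection you sketch.
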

Similarly to $\eqref{(1-q)G}$, we can derive
\[
(1-q)F_{\lambda}(q)=qF^2_{\bar{\lambda}}(q)+1-q^{\lambda_1+1}F_{\bar{\lambda}}(q)
\]
which gives us
\begin{corollary}\label{Ffirstlarge}
If $2\lambda_1\ge|\lambda|$, then $F_\lambda$ is unimodal
\end{corollary}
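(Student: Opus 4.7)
The plan is to imitate the argument used for Corollary~\ref{Gfirstlarge}. Recall that $F_\lambda$ is unimodal if and only if the coefficients of $(1-q)F_\lambda(q)$ go from nonnegative to nonpositive. Using the identity
\[
(1-q)F_\lambda(q) = \bigl(qF^2_{\bar\lambda}(q) + 1\bigr) - q^{\lambda_1+1}F_{\bar\lambda}(q),
\]
I would split the right-hand side into its nonnegative part and its subtracted (also nonnegative) part. Since $q^{\lambda_1+1}F_{\bar\lambda}(q)$ is supported in degrees $\ge \lambda_1+1$, for $n \le \lambda_1$ the coefficient of $q^n$ in $(1-q)F_\lambda(q)$ equals the corresponding coefficient of $qF^2_{\bar\lambda}(q)+1$ and is automatically nonnegative.

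The remaining task is to show the coefficient of $q^n$ is nonpositive for $n>\lambda_1$. Coefficient-by-coefficient, this amounts to the inequality
\[
\#\{\nu\subseteq\bar\lambda : \nu \text{ has distinct parts},\ \nu_1+|\nu|=n-1\} \le \#\{\mu\subseteq\bar\lambda : \mu \text{ has distinct parts},\ |\mu|=n-\lambda_1-1\}
\]
for $n>\lambda_1$. Under the hypothesis $\lambda_1\ge|\bar\lambda|\ge\nu_1$, the ``deficit'' $\delta=\lambda_1+1-\nu_1\ge 1$, so every $\nu$ on the left is strictly larger (by $\delta$) than the target size on the right, giving room to shrink. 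My plan would be to construct an explicit injection $\nu\mapsto\mu$ by canonically removing $\delta$ boxes from $\nu$.

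The step I expect to be the main obstacle is producing this injection while preserving the distinct-parts constraint. For the analogous $G$-version any sub-shape of $\nu$ would suffice, but here the boxes must be removed so that the remaining parts stay pairwise distinct and still fit inside $\bar\lambda$. A natural attempt is to subtract $\delta$ from $\nu_1$ when the result stays distinct from $\nu_2$ (and nonnegative), and otherwise shift the deletion downwards through the partition; making this canonical and checking injectivity on all cases is the delicate part. Once the injection is in hand, the two halves combine to give the desired nonnegative-then-nonpositive pattern of $(1-q)F_\lambda(q)$, and unimodality of $F_\lambda$ follows.
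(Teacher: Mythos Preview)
Your decomposition via the identity matches the paper's, and the nonnegativity of the coefficients for $n\le\lambda_1$ is correct. The gap is in the second half: the inequality
\[
\#\{\nu\subseteq\bar\lambda,\ \text{distinct}:\nu_1+|\nu|=n-1\}\ \le\ \#\{\mu\subseteq\bar\lambda,\ \text{distinct}:|\mu|=n-\lambda_1-1\}
\]
that your injection would establish is \emph{false} for some $n>\lambda_1$, so no such injection can exist. For a concrete counterexample take $\lambda=(7,4,3)$, which has distinct parts and satisfies $2\lambda_1=14=|\lambda|$. At $n=9$ the left-hand side counts $\nu=(4)$ and $\nu=(3,2)$ in $\bar\lambda=(4,3)$ (both have $\nu_1+|\nu|=8$), while the right-hand side counts only $\mu=(1)$; hence $[q^{9}](1-q)F_\lambda=2-1=1>0$ even though $9>\lambda_1=7$. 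The full coefficient string of $(1-q)F_\lambda$ in this example is
\[
1,\ 0,\ 0,\ 1,\ 0,\ 1,\ 1,\ 1,\ 0,\ 1,\ 0,\ -1,\ -1,\ -2,\ -1,\ -1,
\]
so $F_\lambda$ is indeed unimodal, but the sign change occurs at $n=11$, not at $\lambda_1+1=8$.

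In other words, the supports of the positive piece $qF^2_{\bar\lambda}+1$ and the subtracted piece $q^{\lambda_1+1}F_{\bar\lambda}$ genuinely overlap, and on that overlap the positive piece can strictly dominate. The obstacle you flagged as ``delicate'' is therefore not a bookkeeping issue but an actual obstruction: a termwise comparison with threshold $\lambda_1$ cannot succeed. A correct argument has to control what happens inside the overlap region rather than bound one piece by the other there; note that the paper's own one-line proof does not spell this step out either.
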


\section{Partitions of length 4}\label{sec: 4parts}
Some set $S$ of partitions with $b$ parts has \emph{positive density} if
\[
\liminf_{n\to\infty}\frac{|\{\lambda\in S\mid |\lambda|\le n\}|}{|\{\text{partitions $\lambda$ with $b$ parts and $|\lambda|\le n$}\}|}>0.
\]
Note that corrolaries~\ref{Gfirstlarge} and \ref{Ffirstlarge} imply that a positive density of partitions with 4 parts have unimodal $F_\lambda$ and $G_\lambda$.
\begin{theorem}
A positive density of partitions $\lambda$ with 4 parts have nonunimodal $G_\lambda$ and a positive density of partitions $\mu$ with 4 distinct parts have nonunimodal $F_\mu$.
\end{theorem}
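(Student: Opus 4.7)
The plan is to construct an open cone $U$ of full dimension inside the order region $\{(x_1,x_2,x_3,x_4):x_1\ge x_2\ge x_3\ge x_4>0\}\subset\R^4$ such that every integer 4-tuple $\lambda=(\lambda_1,\lambda_2,\lambda_3,\lambda_4)\in U$ gives a nonunimodal $G_\lambda$. Since the set of 4-part partitions with $|\lambda|\le n$ has size $\Theta(n^4)$ and the lattice points of the scaled cone $nU$ intersected with $\{|x|\le n\}$ also number $\Theta(n^4)$, this gives positive density. For the $F_\mu$ version, a parallel family would be produced through the relation \eqref{Fgenfunc}.

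My approach is to apply \eqref{Ggenfunc} with $b=4$, writing $G_\lambda(q)$ as a signed sum of 16 explicit rational functions of $q$, and to extract the coefficient $a_m$ of $q^m$ at a well-chosen index $m$, with the goal of showing the second difference $a_{m-1}-2a_m+a_{m+1}$ is strictly positive on an open region of parameters. This forces a local dip in the coefficient sequence and hence nonunimodality. Equivalently, I would use the observation just before Corollary~\ref{Gfirstlarge} that $G_\lambda$ is unimodal iff the coefficients of $(1-q)G_\lambda(q)$ pass from nonnegative to nonpositive, and produce a sign oscillation in that sequence instead. Inside any fixed chamber of parameter space defined by linear inequalities on the $\lambda_i$, each $a_m$ is a polynomial in the four parts, so starting from a known nonunimodal Stanton example $\lambda^\ast$ I would identify an index $m^\ast$ at which the relevant second-difference polynomial is strictly positive; strict positivity then propagates by continuity to an open neighborhood of $\lambda^\ast$ inside that chamber, and the homogeneity of the inequalities under rescaling extends this to a full-dimensional cone of proportions.

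The $F_\mu$ statement should follow by applying \eqref{Fgenfunc} and noting that the $c=4$ summand is a shifted $G$ of a 4-distinct-part partition obtained from $\mu$; the same second-difference argument transfers once the lower-order $c<4$ summands are shown not to cancel the dip in the range of $m$ under consideration. The main obstacle will be the coefficient extraction itself: among the 16 rational terms in \eqref{Ggenfunc}, identifying an $m^\ast$ at which all 16 contributions combine into a strictly positive second-difference polynomial—uniformly over an open region of parameters—requires careful bookkeeping of which geometric-series summands in each $G^1_{\lambda,A}$ hit a given coefficient, and of the resulting cancellations. Making the inequality robust to perturbation of the parts, rather than just verifying it pointwise at $\lambda^\ast$, is where the bulk of the technical work will lie.
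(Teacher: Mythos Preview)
Your plan has a genuine gap at the step where you pass from a single example $\lambda^\ast$ to an open cone by ``continuity and homogeneity.''  The problem is that the coefficient $a_m$ of $G_\lambda$ is not a polynomial in $(\lambda_1,\lambda_2,\lambda_3,\lambda_4,m)$ on a chamber cut out by linear inequalities: it is a \emph{quasipolynomial} in $m$, with period $\lcm(1,2,3,4)=12$, coming from the denominators $(1-q^k)$ in \eqref{Ggenfunc}.  Thus there is no homogeneity to exploit.  If you fix $m^\ast$ and enlarge $\lambda$, then once all parts exceed $m^\ast$ the coefficient $a_{m^\ast}$ becomes simply the number of partitions of $m^\ast$ into at most four parts and is independent of $\lambda$; the early coefficients are increasing and no dip survives.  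If instead you try to scale $m^\ast$ together with $\lambda$, the periodic lower-order terms do not scale, and the sign of the second difference at $cm^\ast$ for $c\lambda^\ast$ is not controlled by its sign at $m^\ast$ for $\lambda^\ast$.

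More concretely, when one actually carries out the coefficient extraction in the regime the paper uses, the two quantities $[q^{N+1}](1-q)G_\lambda$ and $[q^{N+2}](1-q)G_\lambda$ turn out to share the \emph{same} leading quadratic form in the parameters $(\lambda_1,\lambda_2,N)$; they differ only at linear order.  Hence the desired sign pattern $[q^{N+1}]<0<[q^{N+2}]$ forces the parameters to lie within $O(1/\lambda_1)$ of a quadric, not in an open cone.  The paper handles this by (i) fixing congruence classes $\lambda_1\equiv\lambda_2\equiv 11\pmod{12}$ so that the quasipolynomial becomes an honest polynomial, and (ii) for each admissible $(\lambda_1,N)$ running an intermediate-value argument in $\lambda_2$ to locate a value where the two signs split.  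The count then comes from the freedom in $\lambda_3,\lambda_4$ and the range of $N$, not from an open cone of $\lambda$'s at a fixed index.  For $F_\mu$ one simply shifts $\mu_i=\lambda_i+(4-i)$ and observes that the $c\le 2$ terms of \eqref{Fgenfunc} have degree below the index in question; your proposal to control the lower-$c$ terms is in the right spirit but would inherit the same scaling issue.
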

\begin{proof}
The generating function gives us the following when $b=4$:
\bal
G_{\lambda}(q)=&\frac{1-q^{4\lambda_4+4}}{(1-q)(1-q^2)(1-q^3)(1-q^4)}+\frac{q^{3\lambda_3+\lambda_4+4}+q^{\lambda_1+3\lambda_4+4}-q^{3\lambda_3+3}-q^{\lambda_1+1}}{(1-q)^2(1-q^2)(1-q^3)}\\
&+\frac{q^{2\lambda_2+\lambda_3+3}+q^{\lambda_1+2\lambda_3+3}+q^{\lambda_1+\lambda_2+2}-q^{2\lambda_2+\lambda_3+\lambda_4+4}-q^{\lambda_1+2\lambda_3+\lambda_4+4}+q^{\lambda_1+\lambda_2+2\lambda_4+4}}{(1-q)^3(1-q^2)}\\
&+\frac{q^{\lambda_1+\lambda_2+\lambda_3+\lambda_4+4}-q^{\lambda_1+\lambda_2+\lambda_3+3}}{(1-q)^4}+\frac{q^{2\lambda_2+2\lambda_4+4}-q^{2\lambda_2+2}}{(1-q)^2(1-q^2)^2}.
\eal

We will look at cases when $\lambda_4>0.9\lambda_1$ and when $\lambda_1\equiv\lambda_2\equiv 11\pmod{12}$. For $\lambda_1+\lambda_2+14\le N<2\lambda_1+2$ and $12\mid N$, we will be interested in cases where $[q^{N+1}](1-q)G_{\lambda}(q)<0$, and $[q^{N+2}](1-q)G_{\lambda}(q)>0$, since such cases show that $G_{\lambda}(q)$ is not unimodal. We will also assume that $\lambda_1$ is sufficiently large.

Define $m,n,\ell$ so that $\lambda_1=12m-1$, $\lambda_2=12n-1$, and $N=12(\ell-1)$.
Since $3\lambda_4>N+2$, the only relevant terms in $(1-q)G_{\lambda}(q)$ are
\[
\frac{1}{(1-q^2)(1-q^3)(1-q^4)}-\frac{q^{12m}}{(1-q)(1-q^2)(1-q^3)}+\frac{q^{12(m+n)}}{(1-q)^2(1-q^2)}-\frac{q^{24n}}{(1-q)(1-q^2)^2}.
\]
We turn all the denominators into the form $(1-q)^{12}$ and look at coefficients of $q^i$ in the numerators for $i$ being 1 or 2 mod 12. This gives us
\bal
[q^{N+1}](1-q)G_{\lambda}(q)&=9\ell^2-(48m+15)\ell+24m^2+72mn-36n^2+38m+6\\
[q^{N+2}](1-q)G_{\lambda}(q)&=9\ell^2-(48m+15)\ell+24m^2+72mn-36n^2+34m+6n+6.\\
\eal
We will now restrict further to the case that $1.999m<\ell<1.9999m$ and $0.98m<n<\ell-m$. Let
\bal
f(m,\ell,n)=[q^{N+1}](1-q)G_{\lambda}(q)&=9\ell^2-(48m+15)\ell+24m^2+72mn-36n^2+38m+6\\
g(m,\ell,n)=[q^{N+2}](1-q)G_{\lambda}(q)&=9\ell^2-(48m+15)\ell+24m^2+72mn-36n^2+34m+6n+6.\\
\eal

First, we note that $g(m,\ell,n-1)-f(m,\ell,n)=-78(m-n)+2m-42>0$. We also note that 
\[
g(\ell,m,\ell-m-1)=-27\ell^2+96\ell m-84m^2\pm O(m). 
\]
Using $1.999m<\ell<1.9999m$ and neglecting linear terms since $\lambda_1$ is sufficiently large gives us $g(\ell,m,\ell-m-1)>0$. Similarly,
\[
g(\ell,m,\ceil{0.98m}+1)=9\ell^2-48m\ell+59.9856m^2\pm O(m)<0. 
\]
Thus for any $m,\ell$, there must exist some $n$ within the bounds defined above such that $g(\ell,m,n)>0$ and $g(\ell,m,n-1)\ge 0$. Then $f(\ell,m,n)< g(m,\ell,n-1)\le 0$. Since $[q^N](1-q)G_{\lambda}(q)>[q^{N+2}](1-q)G_{\lambda}(q)$, we then have
\bal
[q^{N+1}](1-q)G_{\lambda}(q)&<0\\
[q^{N+2}](1-q)G_{\lambda}(q)&>0,\\
\eal
so $G_\lambda(q)$ is nonunimodal at $N+1$.

Thus there are $\Theta(a^2)$ triples $(n,m,\ell)$ with $m\le a$ that satisfy all the conditions above and are not unimodal, so there are $\Theta(a^4)$ pairs $(\lambda,N)$ with $\lambda_1\le a$ that satisfy all the conditions above and are not unimodal. Finally, we can check that $g(m,\ell+1,n)<f(m,\ell,n)$ and $g(m,\ell,n)$ is decreasing in $\ell$ over the appropriate range, so for any pair of values $(m,n)$, there is at most one $\ell$ with $f(\ell,m,n)<0$ and $g(\ell,m,n)>0$. Thus we are not double-counting any partitions. Hence there are $\Theta(a^4)$ partitions $\lambda$ with 4 parts $\lambda_1\le a$, and nonunimodal $G_\lambda$, so a positive density of 4-part partitions $\lambda$ have nonunimodal $G_\lambda$, proving the first part of the theorem.

If $\mu$ has 4 parts, then
\[
F_\mu(q)=q^6G_{(\mu_1-2,\mu_2-1,\mu_3)}(q)+qG_{(\mu_1-1,\mu_2)}(q)+1
\]
where the first term covers partitions $\nu\subseteq\mu$ with 3 or 4 parts, the second term covers $\nu$ with 1 or 2 parts, and the third term covers the empty partition. Whenever $\lambda$ is a partition with nonunimodal $G_\lambda$ of the form described above, taking $\mu=(\lambda_1+3,\lambda_2+2,\lambda_3+1,\lambda_4)$ gives us
\bal
[q^{N+7}](1-q)F_{\mu}(q)=[q^{N+1}](1-q)G_{\lambda}(q)&<0\\
[q^{N+8}](1-q)F_{\mu}(q)=[q^{N+2}](1-q)G_{\lambda}(q)&>0\\
\eal
since $\deg(qG_{(\mu_1-1,\mu_2)}(q)+1)=\mu_1-1+\mu_2+1=\lambda_1+\lambda_2+5<N+6$. Thus $F_\mu$ is not unimodal. This proves the theorem.
\end{proof}

\section{Ascending and Descending Segments}\label{sec: ascenddescend}
We will try to prove that, under certain conditions on $\lambda$, we get long initial increasing and final decreasing sequences of coefficients of $G_\lambda$. To prove the increasing sequences, we will use the unimodality of the largest rectangle that fits inside $\lambda$, while for the decreasing sequences we will use the unimodality of the smallest rectangle that contains $\lambda$.

\begin{proposition} \label{equalincreasingweak}
For any partition $\lambda$ and any $k\ge 1$, the sequence $a_0,\ldots,a_{\ceil{k\lambda_k/2}}$ of coefficients of $G_\lambda$ is weakly increasing.
\end{proposition}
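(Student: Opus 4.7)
The plan is to factor $G_\lambda(q)$ through a partition bijection into a product of two generating functions, then decompose one factor further as a sum of shifted Gaussian binomials whose increasing ranges all reach index $\ceil{k\lambda_k/2}$, and finish with a convolution-monotonicity lemma. The case $k>b$ is trivial since then $\lambda_k=0$, so assume $k\le b$. For $\mu\subseteq\lambda$, set $\mu^{\mathrm{in}}_i=\min(\mu_i,\lambda_k)$ when $i\le k$ and $\mu^{\mathrm{in}}_i=\mu_i$ when $i>k$, and set $\mu^{\mathrm{out}}_i=\max(0,\mu_i-\lambda_k)$ for $i<k$ (with $\mu^{\mathrm{out}}_i=0$ otherwise). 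With $\pi:=(\lambda_k,\ldots,\lambda_k,\lambda_{k+1},\ldots,\lambda_b)$ (whose first $k$ parts equal $\lambda_k$) and $\sigma:=(\lambda_1-\lambda_k,\ldots,\lambda_{k-1}-\lambda_k)$, the map $\mu\mapsto(\mu^{\mathrm{in}},\mu^{\mathrm{out}})$ gives a size-preserving bijection $\{\mu\subseteq\lambda\}\leftrightarrow\{\mu^{\mathrm{in}}\subseteq\pi\}\times\{\mu^{\mathrm{out}}\subseteq\sigma\}$ with inverse $\mu_i=\mu^{\mathrm{in}}_i+\mu^{\mathrm{out}}_i$; the only nontrivial check is that the reconstructed $\mu$ is weakly decreasing at the boundary $i=k$. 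This yields
\[
G_\lambda(q)=G_\pi(q)\cdot G_\sigma(q).
\]

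Next I would decompose $G_\pi(q)$. Any $\mu\subseteq\pi$ splits as $(\alpha,\beta)$ with $\alpha=(\mu_1,\ldots,\mu_k)\subseteq k\times\lambda_k$ and $\beta=(\mu_{k+1},\ldots,\mu_b)\subseteq(\lambda_{k+1},\ldots,\lambda_b)$, subject to $\alpha_k\ge\beta_1$. Shifting $\alpha_i\mapsto\alpha'_i:=\alpha_i-\beta_1$ places $\alpha'$ in $k\times(\lambda_k-\beta_1)$, so
\[
G_\pi(q)=\sum_{\beta\subseteq(\lambda_{k+1},\ldots,\lambda_b)} q^{|\beta|+k\beta_1}\binom{k+\lambda_k-\beta_1}{k}_q.
\]
Each summand is a shifted Gaussian binomial, and by unimodality of $\binom{k+m}{k}_q$ its coefficients are weakly increasing through index $|\beta|+k\beta_1+\ceil{k(\lambda_k-\beta_1)/2}$. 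For $\beta_1=0$ this index is exactly $\ceil{k\lambda_k/2}$; for $\beta_1\ge1$ (and $k\ge1$), it is at least $k\beta_1+k(\lambda_k-\beta_1)/2=k\lambda_k/2+k\beta_1/2\ge\ceil{k\lambda_k/2}$. Summing yields that $G_\pi(q)$ is weakly increasing through index $\ceil{k\lambda_k/2}$.

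Finally, if $p(q)=\sum b_nq^n$ is weakly increasing up to $M$ and $r(q)=\sum c_nq^n$ has nonnegative coefficients, then the coefficients $a_n$ of $pr$ satisfy $a_n-a_{n-1}=\sum_m(b_m-b_{m-1})c_{n-m}\ge0$ for $n\le M$, since only indices with $m\le n\le M$ contribute. Applying this with $p=G_\pi$, $r=G_\sigma$, and $M=\ceil{k\lambda_k/2}$ completes the argument. I expect the main subtleties to lie in verifying the bijection (particularly the boundary-row check that $\mu^{\mathrm{in}}_k+\mu^{\mathrm{out}}_k\ge\mu^{\mathrm{in}}_{k+1}+\mu^{\mathrm{out}}_{k+1}$) and in the case-split inequality in the Gaussian-binomial calculation, neither of which is deep but both of which require care.
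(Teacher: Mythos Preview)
Your factorization $G_\lambda(q)=G_\pi(q)\cdot G_\sigma(q)$ is false, and this is a genuine gap, not a routine check. The map $\mu\mapsto(\mu^{\mathrm{in}},\mu^{\mathrm{out}})$ is injective, but it is not surjective onto $\{\nu\subseteq\pi\}\times\{\omega\subseteq\sigma\}$: the image consists precisely of those pairs with $\omega_i>0\Rightarrow\nu_i=\lambda_k$ for every $i<k$. A pair with $\omega_i>0$ but $\nu_i<\lambda_k$ is not hit, because your forward map only produces a nonzero $\mu^{\mathrm{out}}_i$ when $\mu_i>\lambda_k$, which forces $\mu^{\mathrm{in}}_i=\lambda_k$. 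Concretely, take $k=2$ and $\lambda=(3,2,1)$, so $\pi=(2,2,1)$ and $\sigma=(1)$. Then $G_\lambda(q)=1+q+2q^2+3q^3+3q^4+3q^5+q^6$, while $G_\pi(q)G_\sigma(q)=(1+q+2q^2+2q^3+2q^4+q^5)(1+q)=1+2q+3q^2+4q^3+4q^4+3q^5+q^6$. The pair $(\nu,\omega)=((1,1,1),(1))$ reconstructs to $\mu=(2,1,1)$, but so does $((2,1,1),\emptyset)$; your ``inverse'' is not injective.

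The paper's proof has the same underlying geometry but handles the dependence correctly. Instead of trying to make the inside and outside of the $k\times\lambda_k$ rectangle $X$ independent, it fixes the outside configuration $R=\mu\setminus X$ and observes that $R$ forces certain rows and columns of $X$ to be entirely present in $\mu$, leaving a genuine sub-rectangle of free choices. That conditional slice is a single (shifted) Gaussian binomial centered at index at least $k\lambda_k/2$, and summing over $R$ gives the result. Your second decomposition of $G_\pi$ is in this spirit and is correct; the error is only in the first step. If you replace the product $G_\pi G_\sigma$ by the slice-over-$R$ decomposition (equivalently, impose the constraint $\omega_i>0\Rightarrow\nu_i=\lambda_k$ and carry it through), the rest of your argument goes through essentially as written.
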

\begin{proof}
Let $X$ be the rectangle of squares $(x,y)$ in the Ferrers diagram of $\lambda$ satisfying $x\le k$ and $y\le\lambda_k$. Let $R\subseteq\lambda\wo X$ be some fixed set of squares. Look at partitions $\mu$ such that $\mu\subseteq\lambda$ and $\mu\wo X=R$ and assume that there is at least one such $\mu$. Let $b_n$ be the number of such $\mu$ of size $n$. Some number (possibly 0) of rows and columns of $X$ are forced to be in $\mu$, and the remaining squares of $X$ form a rectangle. Thus $\{b_i\}$ is unimodal symmetric, centered at an index of at least $k\lambda_k/2$, so $\{b_i\}$ is weakly increasing up to at least $b_{\ceil{k\lambda_k/2}}$. Since $\{a_i\}$ is the sum of such sequences over all possible choices of $R$, we get that $\{a_i\}$ is weakly increasing up to at least $a_{\ceil{k\lambda_k/2}}$.
\end{proof}

\begin{proposition}\label{equaldecreasingstrong} 
For any partition $\lambda$ of length $b$ with $\lambda_{b-1}\ge\frac{2b-5}{2b-4}\lambda_1$ and $\lambda_b\ge\frac{|\lambda|}{b+1}$, if we let $N=\floor{b\lambda_1/2}$, then the sequence $a_N,a_{N+1},\ldots$ is weakly decreasing.
\end{proposition}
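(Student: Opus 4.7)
My plan is to pass to the dual picture. Let $R=[b]\times[\lambda_1]$ be the smallest rectangle containing $\lambda$ and let $\lambda':=(\lambda_1-\lambda_b,\lambda_1-\lambda_{b-1},\ldots,\lambda_1-\lambda_2,0)$, the partition obtained by rotating $R\wo\lambda$ by $180^\circ$. The map $\mu\mapsto\tilde\mu:=R\wo\mu$ (rotated $180^\circ$) is a bijection from $\{\mu:\mu\subseteq\lambda\}$ onto the Young-lattice interval $\{\tilde\mu:\lambda'\subseteq\tilde\mu\subseteq R\}$, and $|\mu|+|\tilde\mu|=b\lambda_1$. Writing $h_m=|\{\tilde\mu:\lambda'\subseteq\tilde\mu\subseteq R,\ |\tilde\mu|=m\}|$, we get $a_n=h_{b\lambda_1-n}$, so the claim that $a_N,a_{N+1},\ldots$ is weakly decreasing is equivalent to the dual claim that $h_m$ is weakly increasing for $|\lambda'|\le m\le \ceil{b\lambda_1/2}$.

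To prove the dual claim I would mirror the proof of Proposition~\ref{equalincreasingweak}. Pick a rectangle $Y\subseteq R$ and, for each fixed $\tilde R_0:=\tilde\mu\wo Y$, analyse how $\tilde\mu\cap Y$ varies. The possibilities are controlled by (i) the rows and columns of $Y$ forced full by $\tilde R_0$ via the partition property, and (ii) the additional forcing from $\lambda'\cap Y$ via the lower bound $\lambda'\subseteq\tilde\mu$. When $\lambda'\cap Y$ fits inside the rows and columns of $Y$ already forced full by $\tilde R_0$, the unforced region of $Y$ is a genuine sub-rectangle in which $\tilde\mu\cap Y$ may be any partition, so the slice's contribution to $h_m$ is a Gaussian binomial, unimodal symmetric centred at $|\tilde R_0|+F/2+|Y|/2$ (where $F$ is the number of forced squares in $Y$). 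Its weakly-increasing range extends at least up to this centre, so it suffices to arrange that every centre lies at $\ge \ceil{b\lambda_1/2}$, or else to deal with the deficient slices separately.

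The natural choice is $Y=[b-1]\times[\lambda_1]$, i.e.\ all of $R$ except the bottom row, so $\tilde R_0$ is the bottom row of $\tilde\mu$ of length $t:=\tilde\mu_b$. Then $|Y|=(b-1)\lambda_1$, the bottom row forces the first $t$ columns of $Y$ to be full so $F=(b-1)t$, and the centre of the slice at $t$ is $(b+1)t/2+(b-1)\lambda_1/2$, which is $\ge\ceil{b\lambda_1/2}$ exactly when $t\ge\lambda_1/(b+1)$. The hypothesis $\lambda_{b-1}\ge\frac{2b-5}{2b-4}\lambda_1$ bounds $\lambda'_2,\ldots,\lambda'_{b-1}$ by $\lambda_1/(2b-4)$, making $\lambda'\cap Y$ essentially a single top row of length $\lambda'_1=\lambda_1-\lambda_b$; whenever $t\ge\lambda'_1$ this row is swallowed by the forced columns of $\tilde R_0$ and the slice really is a clean Gaussian binomial. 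The hypothesis $\lambda_b\ge|\lambda|/(b+1)$ controls $\lambda'_1$ and is what makes the two thresholds $t\ge\lambda'_1$ and $t\ge\lambda_1/(b+1)$ compatible on the relevant part of the range.

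The main obstacle will be the remaining slices, those with $t$ small, where either $\lambda'\cap Y$ is no longer absorbed by the forced columns (so extra lower bounds $\tilde\mu_i\ge\lambda'_i$ enter the count) or the centre of the slice lies before $\ceil{b\lambda_1/2}$. In both regimes the slice is no longer a single Gaussian binomial; I would handle it by further stratifying—for example by $\tilde\mu_1$, which is forced to be at least $\lambda'_1$—expressing the restricted count as a sum of smaller Gaussian binomials, and then using the two hypotheses together to certify that the combined contribution is still weakly increasing throughout the required range. I expect this case analysis to be the technical core of the proof, and the precise numerical thresholds $\frac{2b-5}{2b-4}$ and $\frac{1}{b+1}$ in the statement to come out of making these estimates tight.
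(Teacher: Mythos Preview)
Your duality $a_n=h_{b\lambda_1-n}$ is correct, and the slice analysis for $t=\tilde\mu_b\ge\lambda'_1$ is fine: there the slice is a genuine Gaussian binomial centred at $(b+1)t/2+(b-1)\lambda_1/2$. But the proposal has a real gap, and it is precisely the part you flag as ``the technical core.''

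First, the two thresholds do not align the way you suggest. From $\lambda_b\ge|\lambda|/(b+1)$ one does \emph{not} get $\lambda'_1=\lambda_1-\lambda_b\le\lambda_1/(b+1)$; that would force $b\lambda_1\le|\lambda|$, i.e.\ $\lambda$ rectangular. So in general there is a nonempty range of $t$ where the centre is already past $\ceil{b\lambda_1/2}$ but the constraint $\tilde\mu_1\ge\lambda'_1$ is still active, and a further range where neither condition helps. Second, your proposed fix---stratify additionally by $\tilde\mu_1=s$---does not save the day: with $\tilde\mu_b=t$ and $\tilde\mu_1=s$ fixed (and, say, $t\ge\lambda'_2$ so the remaining constraints are inactive), the middle rows form a $(b-2)\times(s-t)$ rectangle and the sub-slice is centred at $b(s+t)/2$. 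For $s$ near $\lambda'_1$ and $t$ small this centre is well below $b\lambda_1/2$, so these pieces are \emph{decreasing} on part of the target range. You would therefore need cancellation across sub-slices, and nothing in the proposal supplies it. In fact the $t=0$ slice is, after your own duality inside the top $b-1$ rows, exactly a shift of $G_{\bar\lambda}$, whose decreasing range begins only around $(b-1)\lambda_2/2$, not at $(b-2)\lambda_1/2$; so even an induction on $b$ along these lines leaves an interval uncovered.

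The paper's argument is organised differently. It inducts not on $b$ or on $t$, but on the number of descents of $\lambda$: one replaces $\lambda$ by $\nu$ with the last block of equal parts raised to the previous value, so that $G_\nu$ is handled by induction, and shows the difference $H=G_\nu-G_\lambda$ is weakly increasing up to an index $M\approx\frac{b+k+1}{2}\lambda_{k+1}$. This already gives weak decrease of $G_\lambda$ on $[N,M]$. The remaining tail is then handled by a second, independent decomposition: one carves out a set $X\subset\lambda$ that is a staircase of rectangles (each strictly northeast of the next), fixes $\mu\setminus X$, and uses that products of Gaussian binomials are symmetric unimodal to get weak decrease from $|\lambda|-\ceil{|X|/2}$ onward. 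The hypotheses $\lambda_{b-1}\ge\frac{2b-5}{2b-4}\lambda_1$ and $\lambda_b\ge\frac{|\lambda|}{b+1}$ are exactly what is needed to make these two ranges overlap (the first for $k<b-1$, the second for $k=b-1$). The key idea you are missing is this two-stage coverage: a comparison to a simpler $\nu$ for the near-middle range, and a separate rectangle-product argument for the far tail, rather than trying to make a single slicing work uniformly.
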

\begin{proof}
Let $m$ be the number of indices $1\le k<b$ such that $\lambda_k>\lambda_{k+1}$. We prove by induction on $m$ that $a_N,a_{N+1},\ldots$ is weakly decreasing.

If $m=0$, then $\lambda$ is a rectangle and thus $G_\lambda$ is unimodal symmetric, so $a_N,a_{N+1},\ldots$ is weakly decreasing. This proves the base case.

Now assume $m\ge 1$. Take maximum $k$ with $k<b$ such that $\lambda_k>\lambda_{k+1}$. Then $\lambda=(\lambda_1,\ldots,\lambda_k,\lambda_{k+1},\ldots,\lambda_{k+1})$.
Let $\nu=(\lambda_1,\ldots,\lambda_k,\lambda_k,\ldots,\lambda_k)$. Then $G_\nu$ weakly decreases from $q^N$ onward by the inductive hypothesis. Let $H=G_\nu-G_\lambda$. Then $H$ counts partitions $\mu$ inside $\nu$ with $\mu_{k+1}>\lambda_{k+1}$. For any $S=(s_1,\ldots,s_{k+1})$ such that $s_i\le\nu_i$ for all $i$ and $\lambda_{k+1}<s_{k+1}\le s_k\le\cdots\le s_1$, let $H^S$ be the generating function for partitions $\mu$ inside $\nu$ with $\mu_i=s_i$ for all $1\le i\le k+1$. Each ordered tuple of values $(\mu_{k+2},\ldots,\mu_b)$ such that $s_{k+1}\ge\mu_{k+2}\ge\cdots\ge\mu_b\ge 0$ (i.e.\ each partition fitting within a certain rectangle) gives us a partition $\mu=(s_1,\ldots,s_{k+1},\mu_{k+2},\ldots,\mu_b)$ (when $k+1=b$, there is exactly one such partition).Thus $H^S$ is unimodal symmetric, centered at index  $\sum_{i=1}^{k+1} s_i+(b-k-1)s_{k+1}/2$.

Hence $H^S$ is weakly increasing up to $q^L$ for $L=\sum_{i=1}^{k+1}s_i+\ceil{s_{k+1}(b-k-1)/2}$. Since
\bal
\sum_{i=1}^{k+1}s_i+\ceil{s_{k+1}(b-k-1)/2}&\ge (k+1)s_{k+1}+\ceil{s_{k+1}(b-k-1)/2}\\
																					 &\ge \ceil{s_{k+1}(b+k+1)/2}\\
																					 &\ge \ceil{(\lambda_{k+1}+1)(b+k+1)/2},
\eal
we know that $H^S$ is weakly increasing up to $q^M$ for $M=\ceil{(\lambda_{k+1}+1)(b+k+1)/2}$. Since $H=\sum_S H^S$, we know that $H$ is weakly increasing up to $q^M$. Since $G_\lambda=G_\nu-H$, we know that $G_\lambda$ is weakly decreasing from $q^N$ to $q^M$.

We now assume that $k<b-1$ ($k=b-1$ will be a separate case) and let $X$ be the set of squares $(x,y)$ in the Ferrers diagram of $\lambda$ such that $x\ge k+1$ or $y>\lambda_{x+1}$:

\includegraphics[scale=0.2]{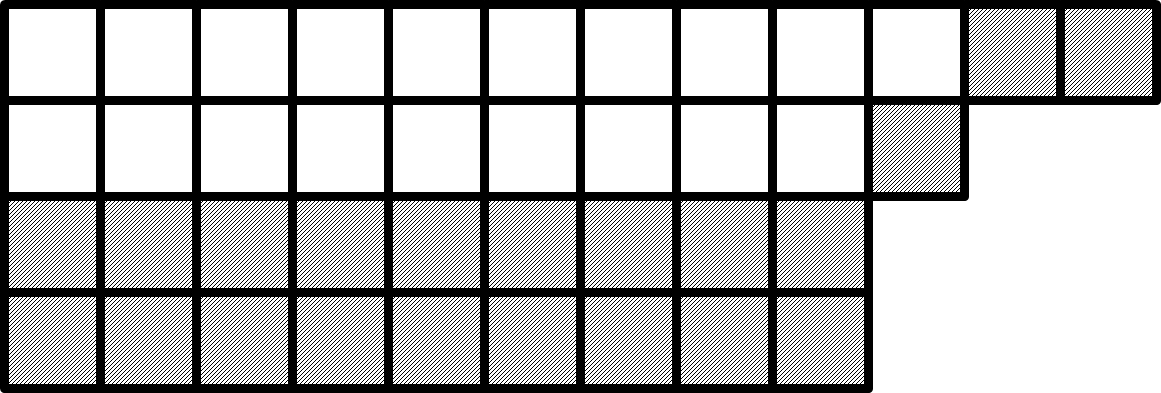}

This gives us $|X|=\lambda_{k+1}\left(b-k-1\right)+\lambda_1$. Note that $X$ is a disjoint union of a set of rectangles such that for any two rectangles in the set, one lies strictly above and to the right of the other. 

Let $R\subseteq\lambda\wo X$ be some fixed set of squares in the Ferrers diagram of $\lambda$. Look at $\mu\subseteq\lambda$ such that $\mu\wo X=R$ and assume that there is at least one such $\mu$. Let $b_n$ be the number of such $\mu$ of size $n$. Some squares in $X$ are forced to be not in $\mu$, and the remaining squares form a disjoint union of a set of rectangles such that for any two rectangles in the set, one lies strictly above and to the right of the other. The generating function for partitions inside each rectangle is unimodal symmetric, and the product of unimodal symmetric generating functions is unimodal symmetric, so $\{b_i\}$ is unimodal symmetric, centered at an index which is at most $|\lambda|-\half|X|$. Since $\{a_i\}$ is the sum of such sequences over all possible choices of $R$, we know that $\{a_i\}$ is weakly decreasing from $a_{|\lambda|-\ceil*{\half|X|}}$ onward.

Since 
\[
\lambda_{k+1}\ge\lambda_{b-1}\ge\frac{2b-5}{2b-4}\lambda_1\ge\frac{2k-1}{2k}\lambda_1,
\]
we get
\bal
M+\ceil*{\half|X|}&=\ceil*{\half(\lambda_{k+1}+1)(b+k+1)}+\ceil*{\half\left(\lambda_{k+1}\left(b-k-1\right)+\lambda_1\right)}\\
                 &\ge \half\lambda_{k+1}(b+k+1)+\half\lambda_{k+1}(b-k-1)+\half\lambda_1\\
								 &\ge (b-k)\lambda_{k+1}+k\lambda_{k+1}+\half\lambda_1\\
								 &\ge (b-k)\lambda_{k+1}+\left(k\left(\frac{2k-1}{2k}\right)+\half\right)\lambda_1\\
								 &\ge (b-k)\lambda_{k+1}+k\lambda_1\\
								 &\ge |\lambda|.
\eal
Then, since we know that $\{a_i\}$ is weakly decreasing from $a_{|\lambda|-\ceil*{\half|X|}}$ onward and also weakly decreasing from $a_N$ to $a_M$, we have that $\{a_i\}$ is weakly decreasing from $a_N$ onward, so we are done with the inductive step.

We look separately at the case $k=b-1$. There, for each $n>|\lambda|-\lambda_b$, we have an injection from partitions $\mu\subseteq\lambda$ of size $n$ to partitions $\nu\subseteq\lambda$ of size $n-1$ given by $\phi(\mu)=(\mu_1,\ldots,\mu_{b-1},\mu_b-1)$. Thus $\{a_i\}$ is weakly decreasing from $a_{|\lambda|-\lambda_b}$ onward.
We get
\bal
M+\lambda_b&=\ceil*{\half(\lambda_b+1)(2b)}+\lambda_b\\
           &\ge b\lambda_b+\lambda_b\\
					 &\ge (b+1)\lambda_b\\
					 &\ge |\lambda|.
\eal
Then, since we know that $\{a_i\}$ is weakly decreasing from $a_{|\lambda|-\lambda_b}$ onward and also weakly decreasing from $a_N$ to $a_M$, we have that $\{a_i\}$ is weakly decreasing from $a_N$ onward, so we are done with the inductive step.

\end{proof}

\section{Concavity}\label{sec: concave}
\begin{lemma}\label{quasipolyslemma}
If $C$ is a finite multiset of positive integers with $\gcd(C)=1$, then
\[
[x^n]\prod_{c\in C} \frac{1}{1-q^c}=\prod_{c\in C}\frac{1}{c}\binom{n}{|C|-1}\pm O_C(n^{|C|-2}).
\]
\end{lemma}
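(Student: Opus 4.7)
The plan is to apply a partial fraction decomposition to $P(q) := \prod_{c \in C}(1-q^c)^{-1}$. First, using $1-q^c = (1-q)(1+q+\cdots+q^{c-1})$, I would write
\[
P(q) = \frac{R(q)}{(1-q)^{|C|}}, \qquad R(q) = \prod_{c \in C}\frac{1}{1+q+\cdots+q^{c-1}}.
\]
Since $R$ is regular at $q=1$ with $R(1) = \prod_{c\in C} 1/c$, the function $P$ has a pole of order exactly $|C|$ at $q=1$, and the leading Laurent coefficient (in powers of $1-q$) is $\prod_{c \in C} 1/c$.

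Next I would verify that every other pole of $P$ has strictly smaller order, which is where the hypothesis $\gcd(C)=1$ enters. A root of unity $\zeta$ is a pole of $(1-q^c)^{-1}$ iff $\mathrm{ord}(\zeta)\mid c$, so the order of $P$ at $\zeta$ equals $\#\{c\in C : \mathrm{ord}(\zeta)\mid c\}$. If this count were $|C|$ for some $\zeta \ne 1$, then $\mathrm{ord}(\zeta) > 1$ would divide $\gcd(C)=1$, a contradiction. Thus the partial fraction decomposition has the form
\[
P(q) = \frac{\prod_{c\in C} 1/c}{(1-q)^{|C|}} + \sum_{k=1}^{|C|-1}\frac{a_k}{(1-q)^k} + \sum_{\zeta\ne 1}\sum_{j=1}^{m_\zeta}\frac{b_{\zeta,j}}{(1-q/\zeta)^j},
\]
where every $m_\zeta \le |C|-1$ and the outer sum runs over finitely many roots of unity.

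Finally, I would extract coefficients using the standard identity $[q^n](1-\alpha q)^{-j} = \binom{n+j-1}{j-1}\alpha^n$. The first summand produces $\bigl(\prod 1/c\bigr)\binom{n+|C|-1}{|C|-1}$, which differs from $\bigl(\prod 1/c\bigr)\binom{n}{|C|-1}$ by a polynomial in $n$ of degree $|C|-2$. Every other summand contributes a polynomial in $n$ of degree at most $|C|-2$ times a bounded factor $\zeta^{-n}$. Summing these corrections gives the claimed bound $\pm O_C(n^{|C|-2})$, with the implicit constant absorbing finite sums over $\zeta$ and the partial fraction numerators (all of which depend only on $C$).

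The main obstacle is conceptually small: the whole proof hinges on observing that $\gcd(C)=1$ is exactly the condition preventing any other pole from reaching order $|C|$, which is what isolates $\prod_{c\in C}1/c$ as the unique leading coefficient. Everything else is bookkeeping on the binomial-coefficient polynomials of degree at most $|C|-2$.
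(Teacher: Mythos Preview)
Your proof is correct. The paper's own proof is the single sentence ``This is easy to see by induction on $|C|$,'' with no further details. Your route via partial fractions is genuinely different and more self-contained: you isolate the unique highest-order pole at $q=1$, use the hypothesis $\gcd(C)=1$ exactly once to rule out any competing pole of full order, and then read off the leading coefficient directly. An inductive argument, by contrast, has to contend with the fact that removing an element from $C$ need not preserve $\gcd(C)=1$ (e.g.\ $C=\{2,3\}$), so one would either have to prove a more general quasi-polynomial statement without the gcd hypothesis and specialize at the end, or carry auxiliary periodicity information through the induction. Your approach is cleaner for the statement as written; the inductive approach would be more natural if one wanted the full quasi-polynomial expansion rather than only the leading term.
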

\begin{proof}
This is easy to see by induction on $|C|$.
\end{proof}

\begin{proposition} \label{5partsequalconcave}
When $b=5$, $\lambda_4\ge\frac{5}{6}\lambda_1$, $\lambda_5\ge\half\lambda_1$, and $\lambda_1$ is sufficiently large, the coefficients of $G_\lambda(q)$ are concave from $q^{\frac{5}{2}\lambda_2}$ to $q^{\frac{5}{2}\lambda_1}$.
\end{proposition}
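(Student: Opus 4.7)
The plan is to apply the expansion~\eqref{Ggenfunc} of $G_\lambda$ as a sum over $A \subseteq [5]$, identify the small set of $A$ whose $q$-shift $e_A := \sum_k f^\lambda_A(k)$ is at most $\tfrac{5}{2}\lambda_1$, invoke Lemma~\ref{quasipolyslemma} to approximate each surviving coefficient by an explicit $\binom{\cdot}{4}$, and then verify that the centered second difference reduces to a quadratic polynomial in $n$ whose sign we can control on the stated interval.

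\textbf{Pruning and approximation.} A direct check over the 32 subsets, using $\lambda_i \ge \lambda_4 \ge \tfrac{5}{6}\lambda_1$ for $i \le 4$ and $\lambda_5 \ge \half\lambda_1$, shows that only $A \in \{\emptyset, \{4\}, \{5\}, \{4,5\}\}$ satisfy $e_A \le \tfrac{5}{2}\lambda_1$; the tightest non-contributors are $\{1\}$ with $5(\lambda_5+1) \ge \tfrac{5}{2}\lambda_1+5$, $\{3\}$ with $3(\lambda_3+1) \ge \tfrac{5}{2}\lambda_1+3$, and $\{3,4\}$ with $\lambda_3 + 2\lambda_2 + 3 \ge \tfrac{5}{2}\lambda_1+3$, while every other subset has strictly larger shift (and since adding any index only inflates the shift, every triple or larger is similarly ruled out). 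For each surviving $A$ the denominator multiset $\{g_A(k)\}$ contains $g_A(1)=1$ and hence has $\gcd = 1$, so Lemma~\ref{quasipolyslemma} gives, for $n$ in our range,
\[
[q^n]G_\lambda = \tfrac{1}{120}\binom{n}{4} - \tfrac{1}{12}\binom{n-2\lambda_2-2}{4} - \tfrac{1}{24}\binom{n-\lambda_1-1}{4} + \tfrac{1}{6}\binom{n-\lambda_1-\lambda_2-2}{4} + O(n^3).
\]
Applying the centered second difference together with $\Delta^2\binom{m}{4} = \binom{m-1}{2}$ produces an expression $Q(n) + O(n)$, where $Q$ is a polynomial in $n$ of degree $2$ whose $n^2$-coefficient is $\tfrac{1}{240}-\tfrac{1}{24}-\tfrac{1}{48}+\tfrac{1}{12} = \tfrac{1}{40}$.

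\textbf{Bounding $Q$.} Since the $n^2$-coefficient is positive, $Q$ is an upward parabola in $n$; its $n$-coefficient is $-\tfrac{\lambda_1}{8}$ at leading order, so the vertex sits precisely at $n^\star = \tfrac{5\lambda_1}{2}$. Consequently $Q$ is monotonically decreasing on $[\tfrac{5\lambda_2}{2}, \tfrac{5\lambda_1}{2}]$ and its maximum occurs at the left endpoint, where a direct computation gives
\[
Q\bigl(\tfrac{5\lambda_2}{2}\bigr) = \tfrac{7\lambda_2^2 - 14\lambda_1\lambda_2 + 6\lambda_1^2}{96} + O(\lambda_1).
\]
The quadratic form $7y^2 - 14y + 6$ in $y := \lambda_2/\lambda_1$ has roots $1 \pm \tfrac{1}{\sqrt 7}$; since $\tfrac{5}{6} > 1 - \tfrac{1}{\sqrt 7} \approx 0.622$, the hypothesis $\lambda_2 \ge \lambda_4 \ge \tfrac{5}{6}\lambda_1$ puts $y$ strictly inside the interval of negativity, yielding $Q(\tfrac{5\lambda_2}{2}) \le -c\lambda_1^2$ for some $c > 0$. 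Hence $Q(n) \le -c\lambda_1^2$ on the entire interval, and for $\lambda_1$ sufficiently large this dominates the $O(\lambda_1)$ error, establishing concavity.

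\textbf{Main obstacle.} The delicate point is that the $n^2$-coefficient of $Q$ does \emph{not} vanish, so concavity cannot be inferred from pure leading-order asymptotics; the argument rests on the coincidence that the parabola's vertex lands exactly at the right endpoint $\tfrac{5\lambda_1}{2}$, which forces the maximum on the interval to occur at the left endpoint $\tfrac{5\lambda_2}{2}$, and on verifying that the hypothesis $\lambda_2 \ge \tfrac{5}{6}\lambda_1$ is just strong enough (i.e.\ sits safely above the critical ratio $1 - 1/\sqrt 7$) to keep $Q$ negative there.
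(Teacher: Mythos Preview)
Your proof is correct and follows essentially the same strategy as the paper's: both prune the expansion~\eqref{Ggenfunc} down to the four subsets $A\in\{\emptyset,\{4\},\{5\},\{4,5\}\}$, invoke Lemma~\ref{quasipolyslemma} to reduce the second difference to an explicit quadratic in $n$ (up to $O(\lambda_1)$), and then bound that quadratic on $[\tfrac{5}{2}\lambda_2,\tfrac{5}{2}\lambda_1]$. The only real difference is in the last step: the paper simply notes that the quadratic is convex in $n$ and checks both endpoints numerically, whereas you compute the vertex, observe that it lands exactly at the right endpoint $\tfrac{5}{2}\lambda_1$, and therefore only need to evaluate at the left endpoint $\tfrac{5}{2}\lambda_2$. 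Your approach is slightly sharper and makes explicit the threshold $\lambda_2/\lambda_1 > 1-1/\sqrt{7}\approx 0.622$ above which concavity at the left endpoint holds, showing that the hypothesis $\lambda_2\ge\tfrac{5}{6}\lambda_1$ has room to spare; the paper's endpoint check gives the same conclusion without isolating this threshold.
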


\begin{proof}
Concavity is equivalent to the negativity of the coefficients of $(1-q)^2G_\lambda(q)$. We will look at the generating function for $G_\lambda(q)$ derived above, however most of its terms will not affect the coefficients we are interested in. For $\frac{5}{2}\lambda_2\le n\le\frac{5}{2}\lambda_1$, we get

\bal
[q^n](1-q)^2G_\lambda(q)&=[q^n]\left(\frac{1-q}{(1-q^2)(1-q^3)(1-q^4)(1-q^5)}-\frac{q^{2\lambda_2+2}}{(1-q^2)^2(1-q^3)}\right.\\
                        &\left.-\frac{q^{\lambda_1+1}}{(1-q^2)(1-q^3)(1-q^4)}+\frac{q^{\lambda_1+\lambda_2+2}}{(1-q)(1-q^2)(1-q^3)}\right)\\
												&=[q^n]\left(\frac{1-q+q^2}{(1-q^4)(1-q^5)(1-q^6)}-\frac{q^{2\lambda_2+2}}{(1-q^2)^2(1-q^3)}\right.\\
												&\left.-\frac{q^{\lambda_1+1}}{(1-q^2)(1-q^3)(1-q^4)}+\frac{q^{\lambda_1+\lambda_2+2}}{(1-q)(1-q^2)(1-q^3)}\right)\\
												&=\frac{1}{120}\binom{n}{2}-\frac{1}{12}\binom{n-2\lambda_2}{2}-\frac{1}{24}\binom{n-\lambda_1}{2}+\frac{1}{6}\binom{n-\lambda_1-\lambda_2}{2}\pm O(n)\\
												&=\frac{n^2}{240}-\frac{\left(n-2\lambda_2\right)^2}{24}-\frac{\left(n-\lambda_1\right)^2}{48}+\frac{\left(n-\lambda_1-\lambda_2\right)^2}{12}\pm O(n)\\
												&=\left(\frac{\left(\frac{n}{\lambda_1}\right)^2}{240}-\frac{\left(\frac{n}{\lambda_1}-2\frac{\lambda_2}{\lambda_1}\right)^2}{24}-\frac{\left(\frac{n}{\lambda_1}-1\right)^2}{48}+\frac{\left(\frac{n}{\lambda_1}-\frac{\lambda_2}{\lambda_1}-1\right)^2}{12}\right)\lambda_1^2\pm O(n).\\
\eal
If we define
\[
\alpha(x,y)=\frac{y^2}{240}-\frac{\left(y-2x\right)^2}{24}-\frac{\left(y-1\right)^2}{48}+\frac{\left(y-x-1\right)^2}{12},
\]
then we care about $\alpha(x,y)$ when $\frac{5}{6}\le x\le 1$ and $\frac{5}{2}x\le y\le \frac{5}{2}$. Note that $\alpha(x,y)$ is convex in $y$, and we can check that $\alpha(x,\frac{5}{2})<-0.002$ and $\alpha(x,\frac{5}{2}x)<-0.002$. Thus $\alpha(\frac{\lambda_2}{\lambda_1},\frac{n}{\lambda_1})<-0.002$. Then if $\lambda_1\gg 1$, the quadratic term dominates (since $n$ is linear in $\lambda_1$), so $[q^n](1-q)^2G_\lambda(q)$ is negative, so the coefficients of $G_\lambda(q)$ are concave from $q^{\frac{5}{2}\lambda_2}$ to $q^{\frac{5}{2}\lambda_1}$.
\end{proof}

\begin{theorem}\label{5nearrectunimodal}
If partition $\lambda$ has 5 parts, with $\lambda_4\ge \frac{5}{6}\lambda_1$, $\lambda_5\ge\frac{|\lambda|}{6}$, and $\lambda_1$ being sufficiently large, then $G_\lambda$ is unimodal. 
\end{theorem}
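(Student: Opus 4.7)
The argument combines the three propositions just proved with the symmetry $a_n = a_{|\lambda|-n}$ of $G_\lambda$, which follows from the bijection $\mu \mapsto \lambda\setminus\mu$ (rotating the complementary Young diagram by $180^\circ$).

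Setting $N = \floor{5\lambda_1/2}$, I would first apply Proposition \ref{equaldecreasingstrong}: its hypotheses specialize at $b=5$ to $\lambda_4 \ge \frac{5}{6}\lambda_1$ and $\lambda_5 \ge \frac{|\lambda|}{6}$, which are given, yielding weakly decreasing coefficients on $[N,|\lambda|]$. The hypothesis $\lambda_5 \ge |\lambda|/6$ together with $\lambda_3,\lambda_4 \ge \frac{5}{6}\lambda_1$ forces $|\lambda| \ge 3\lambda_1$, so $\lambda_5 \ge \frac{1}{2}\lambda_1$ and Proposition \ref{5partsequalconcave} also applies, giving concavity on $[\ceil{5\lambda_2/2}, N]$. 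On this concave window the first differences $d_n = a_{n+1}-a_n$ are non-increasing, and $d_N \le 0$ from the decreasing conclusion; hence $d_n$ changes sign at most once, and combined with the decreasing tail this shows $(a_n)_{n \ge \ceil{5\lambda_2/2}}$ is unimodal with peak at some $M \le N$.

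For the initial portion I would apply Proposition \ref{equalincreasingweak} with $k=5$ to obtain weakly increasing on $[0,\ceil{5\lambda_5/2}]$, and combine this with the symmetric consequence of the decreasing step (namely weakly increasing on $[0, |\lambda|-N]$) to conclude weakly increasing on $[0,P]$ where $P = \max(\ceil{5\lambda_5/2}, |\lambda|-N)$. Overall unimodality with peak $M$ then follows once $P \ge \ceil{5\lambda_2/2}-1$, so that the left-increasing region meets the right-unimodal region.

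This last bridging inequality is where I expect the main difficulty. Using $\lambda_5 \ge |\lambda|/6$ one has $\ceil{5\lambda_5/2} \ge 5|\lambda|/12$, and from $\lambda_3,\lambda_4 \ge \frac{5}{6}\lambda_1$ and $\lambda_5 \ge |\lambda|/6$ one derives $|\lambda| \ge \frac{16}{5}\lambda_1 + \frac{6}{5}\lambda_2$. In the near-rectangular subregime where $\lambda_2$ is close to $\lambda_1$, neither individual bound exceeds $\ceil{5\lambda_2/2}-1$, so the proof must exploit that by the symmetry $\Delta^2 a_n = \Delta^2 a_{|\lambda|-n}$ the concavity window can be reflected to $[|\lambda|-N, |\lambda|-\ceil{5\lambda_2/2}]$; any surviving gap between the two reflected concave windows consists of indices paired by $n \leftrightarrow |\lambda|-n$ whose values agree, forcing a plateau at $|\lambda|/2$ that is compatible with unimodality. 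The ``$\lambda_1$ sufficiently large'' hypothesis is used throughout to absorb the $O(\lambda_1)$ error from Proposition \ref{5partsequalconcave}.
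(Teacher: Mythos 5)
Your overall skeleton is right — the paper also stitches together Propositions \ref{equalincreasingweak}, \ref{5partsequalconcave}, and \ref{equaldecreasingstrong}, and you correctly derive $\lambda_5\ge\frac12\lambda_1$ from the hypotheses so that Proposition \ref{5partsequalconcave} applies. But there is a genuine gap in how you obtain the initial increasing run, and your attempt to patch it by symmetry does not work.

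Applying Proposition \ref{equalincreasingweak} with $k=5$ directly to $\lambda$ only yields weak increase up to $\ceil{5\lambda_5/2}$, and the symmetric consequence of the decreasing tail only gives increase up to $|\lambda|-N$. When $\lambda_2$ is close to $\lambda_1$, both of these stop well short of $\ceil{5\lambda_2/2}$, which is where the concavity window from Proposition \ref{5partsequalconcave} begins. Your fix — reflecting the concavity window to $[|\lambda|-N,\,|\lambda|-\ceil{5\lambda_2/2}]$ and claiming the leftover middle stretch is ``a plateau forced by symmetry'' — is not valid: the uncovered stretch $(|\lambda|-\ceil{5\lambda_2/2},\,\ceil{5\lambda_2/2})$ has length roughly $5\lambda_2-|\lambda|$, which is a positive fraction of $\lambda_1$ (for instance, about $\tfrac{19}{30}\lambda_1$ when $\lambda=(\lambda_1,\lambda_1,\tfrac56\lambda_1,\tfrac56\lambda_1,\tfrac7{10}\lambda_1)$), and the identity $a_n=a_{|\lambda|-n}$ does not force those values to be constant — a symmetric dip centered at $|\lambda|/2$ is perfectly consistent with symmetry yet would destroy unimodality. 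So this step is circular.

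The idea you are missing is a transfer argument. Set $\mu=(\lambda_1,\lambda_2,\lambda_2,\lambda_2,\lambda_2)$, apply Proposition \ref{equalincreasingweak} to $\mu$ with $k=5$ to get $G_\mu$ weakly increasing up to $\ceil{5\lambda_2/2}$, and then observe that $G_\mu$ and $G_\lambda$ have the same coefficients in every degree up to $\min(3\lambda_3,4\lambda_4,5\lambda_5)$, since any $\nu\subseteq\mu$ with $\nu\not\subseteq\lambda$ must satisfy $\nu_i>\lambda_i$ for some $i\in\{3,4,5\}$ and hence $|\nu|>i\lambda_i$. Using $\lambda_3,\lambda_4\ge\tfrac56\lambda_1$ and $\lambda_5\ge\tfrac7{10}\lambda_1$, this threshold is at least $\tfrac52\lambda_1\ge\tfrac52\lambda_2$, so $G_\lambda$ is also weakly increasing up to $\ceil{5\lambda_2/2}$. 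That closes the gap without invoking symmetry, and the rest of your argument (concave on $[\ceil{5\lambda_2/2},N]$, decreasing from $N$ on, so the first differences change sign at most once) then finishes as you intended.
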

\begin{proof}
The condition on $\lambda_5$ gives us
\bal
6\lambda_5&\ge\lambda_1+\lambda_2+\lambda_3+\lambda_4+\lambda_5\\
5\lambda_5&\ge\lambda_1+\frac{5}{6}\lambda_1+\frac{5}{6}\lambda_1+\frac{5}{6}\lambda_1\\
 \lambda_5&\ge\frac{7}{10}\lambda_1.
\eal
Let $\mu=(\lambda_1,\lambda_2,\lambda_2,\lambda_2,\lambda_2)$. By Proposition~\ref{equalincreasingweak}, $G_\mu$ is increasing up to $q^{\frac{5}{2}\lambda_2}$. Since $G_\mu$ and $G_\lambda$ agree up to at least $q^{\min(5\lambda_5,4\lambda_4,3\lambda_3)}$ 
we get that $G_\lambda$ has increasing coefficients up to $q^{\frac{5}{2}\lambda_2}$.

Then by Proposition~\ref{5partsequalconcave}, the coefficients of $G_\lambda$ are concave from $q^{\frac{5}{2}\lambda_2}$ to $q^{\frac{5}{2}\lambda_1}$, and by Proposition~\ref{equaldecreasingstrong}, they are decreasing from $q^{\frac{5}{2}\lambda_1}$ onward. Thus $G_\lambda$ is unimodal.
\end{proof}

Before proving a similar result for all $b\ge 5$, we establish a lemma.
\begin{lemma}\label{nonzeroaltsum}
Let $T(b)=\sum_{i=0}^{\floor{b/2}}(-1)^i(b-2i)^{b-3}\binom{b}{i}$. Then $T(b)\ne 0$ for $b\ge 5$.
\end{lemma}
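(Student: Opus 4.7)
Let $p(y):=\sum_{i=0}^{\floor{b/2}}(-1)^i\binom{b}{i}(y-2i)^{b-3}$, so that $T(b)=p(b)$. My plan is to show $p$ is a polynomial of degree exactly $b-3$, use a symmetry of $p$ about $y=b$ to reduce $T(b)\ne 0$ to the nonvanishing of a smaller polynomial $Q$ at $0$, and then handle this nonvanishing by induction via a recurrence.

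The leading coefficient of $p$ in $y^{b-3}$ is $\sum_{i=0}^{\floor{b/2}}(-1)^i\binom{b}{i}$, which by the standard telescoping identity $\sum_{i=0}^{k}(-1)^i\binom{n}{i}=(-1)^k\binom{n-1}{k}$ equals $(-1)^{\floor{b/2}}\binom{b-1}{\floor{b/2}}\ne 0$, so $p$ has degree exactly $b-3$. Since $y^{b-3}$ has degree less than $b$, its $b$-th forward difference in steps of $2$ vanishes, i.e.\ $\sum_{i=0}^{b}(-1)^i\binom{b}{i}(y-2i)^{b-3}=0$ identically. Splitting this full sum into the partial sum $p(y)$ and its tail, and applying the involution $i\leftrightarrow b-i$ to the tail (separated by the parity of $b$), I would obtain $p(2b-y)=p(y)$ when $b$ is odd and $p(2b-y)-p(y)=(-1)^{b/2}\binom{b}{b/2}(b-y)^{b-3}$ when $b$ is even. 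In either case the antisymmetric part of $p$ about $y=b$ is a known multiple of $(y-b)^{b-3}$, which vanishes at $y=b$, so $p(y)=Q((y-b)^2)+\alpha(y-b)^{b-3}$ for some polynomial $Q$ of degree $\floor{(b-3)/2}$ and explicit $\alpha$ (with $\alpha=0$ for $b$ odd). In particular $T(b)=p(b)=Q(0)$.

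To show $Q(0)\ne 0$, I would apply Zeilberger's creative telescoping to the summand $(-1)^i\binom{b}{i}(b-2i)^{b-3}$ in order to extract a linear recurrence in $b$ for $T(b)$. Together with the base cases $T(5)=25-45+10=-10$ and $T(6)=216-384+120-0=-48$, such a recurrence would propagate nonvanishing to all $b\ge 5$; computational data further suggests $T(b)<0$ throughout, so a sign-tracking induction should close the argument.

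The hard part is this last step. The partial sum $T(b)$ exhibits heavy cancellation: the terms of largest magnitude lie in the interior of the summation range rather than at the endpoints, and the signed total is much smaller than any single term, so naive triangle-inequality bounds do not suffice. If the Zeilberger recurrence turns out to be unwieldy, a fallback is a saddle-point/Laplace-method estimate of the form $|T(b)|\asymp b^{b-3}e^{-\gamma b}$ for some $\gamma>0$, which, combined with finite verification for small $b$, would also suffice.
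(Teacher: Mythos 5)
Your structural setup is correct but the proof is not completed: the two steps you lean on to actually establish nonvanishing are both left as plans, and each has a concrete obstacle. The reduction $T(b)=Q(0)$ via symmetry of $p(y)=\sum_{i\le\floor{b/2}}(-1)^i\binom{b}{i}(y-2i)^{b-3}$ about $y=b$ is a clean observation (and the odd/even bookkeeping checks out), but it does not simplify the nonvanishing question: $Q(0)$ is exactly the quantity $T(b)$ you started with, and nothing about $Q$ makes its constant term easier to control. For the Zeilberger route, the summand $(-1)^i\binom{b}{i}(b-2i)^{b-3}$ is not proper hypergeometric in $(i,b)$ because the exponent $b-3$ varies with the outer parameter, so the algorithm does not apply off the shelf; you would first need to rewrite $(b-2i)^{b-3}$ in a holonomic-friendly form, and you give no indication of how. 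For the saddle-point fallback, the issue is precisely the one you flag: the partial alternating sum has its dominant terms in the interior of the range and the total is exponentially smaller than the largest term, so a Laplace estimate of the form $|T(b)|\asymp b^{b-3}e^{-\gamma b}$ would require a genuine two-sided analysis with explicit error control, not just a heuristic. As written, neither route rules out $T(b)=0$ for some $b$.

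For contrast, the paper's proof avoids asymptotics and recurrences entirely. It checks $b<100$ by direct computation, and for $b\ge 100$ it uses Nagura's theorem to produce a prime $p$ with $3p<b<4p$, writes $b=3p+k$ with $1\le k<p$, and shows $T(b)\not\equiv 0\pmod p$: reducing the binomials via Lucas and the power $(b-2i)^{b-3}$ via Fermat collapses $T(b)\bmod p$ to $-2\sum_{j=0}^k(-1)^j\binom{k}{j}(k-2j)^k$, which is (up to sign) the $k$-th finite difference of $(k-2x)^k$, namely $\pm 2^k k!$, and this is prime to $p$ since $k<p$. That $p$-adic argument is short, uniform, and self-contained. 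If you want to salvage your plan, the most promising direction is to abandon Zeilberger and try to prove $T(b)<0$ directly by a careful two-term Laplace comparison, but that is a substantial analytic undertaking you have not begun.
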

\begin{proof}
For $b<100$, this is easy to check directly using, say, Mathematica.

Nagura~\cite{Nagura1952} proved that whenever $x\ge 25$, there is a prime number $p$ with $x<p<\frac{6}{5}x$. When $b\ge 100$, we can set $x=\frac{b}{4}$ to obtain some odd prime $p$ with $3p<b<4p$. Pick $k$ so that $b=3p+k$. Then $1\le k<p$. We want to show that $T(b)\not\equiv 0\bmod{p}$. Note that $\binom{b}{i}\not\equiv 0\bmod{p}$ only when $0\le i\le k$ or $0\le i-p\le k$. Thus
\bal
T(3p+k)&\equiv \sum_{i=0}^k (-1)^i(3p+k-2i)^{3p+k-3}\binom{3p+k}{i}+\sum_{j=0}^k (-1)^{j+p}(3p+k-2(j+p))^{3p+k-3}\binom{3p+k}{p+j}\\
       &\equiv \sum_{i=0}^k (-1)^i(k-2i)^{3p+k-3}\binom{3p+k}{i}-\sum_{j=0}^k (-1)^j(k-2j)^{3p+k-3}\binom{3p+k}{p+j}\\
		   &\equiv \sum_{i=0}^k (-1)^i(k-2i)^{3p+k-3}\binom{k}{i}-3\sum_{j=0}^k (-1)^j(k-2j)^{3p+k-3}\binom{k}{j}\\
		   &\equiv -2\sum_{j=0}^k (-1)^j(k-2j)^{3(p-1)+k}\binom{k}{j}\\
		   &\equiv -2\sum_{j=0}^k (-1)^j(k-2j)^k\binom{k}{j}.\\
\eal
Note that $\sum_{j=0}^k (-1)^j(k-2j)^k\binom{k}{j}$ is, up to a possible sign, the expression for the $k$-th partial difference of $(k-2x)^k$, which is $(-1)^k2^kk!$ and thus is not divisible by $p$. Therefore $T(3p+k)\not\equiv 0\bmod p$, so $T(3p+k)\ne 0$.
\end{proof}

\begin{theorem}\label{bnearrectunimodal}
For a given integer $b\ge 5$, there exists $\epsilon>0$ such that if $\lambda$ is a partition with $b$ parts satisfying $\lambda_1\gg b$, $\lambda_{\floor{b/2}}>(1-\epsilon)\lambda_1$, and $\lambda_b\ge\frac{2b-5}{2b-4}\lambda_1$, then $G_\lambda$ is unimodal.
\end{theorem}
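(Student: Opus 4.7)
The plan is to mirror the three-stage structure of Theorem~\ref{5nearrectunimodal}: establish an initial weakly increasing segment, a final weakly decreasing segment, and middle concavity bridging them. Let $m = \floor{b/2}$. For the initial segment I take the auxiliary partition $\mu$ with $\mu_1 = \lambda_1$ and $\mu_i = \lambda_m$ for $2\le i\le b$; Proposition~\ref{equalincreasingweak} applied to $\mu$ with $k=b$ gives $G_\mu$ weakly increasing up to $q^{\ceil{b\lambda_m/2}}$. The smallest partition fitting in $\mu$ but not $\lambda$ has size at least $\min_{i>m}i(\lambda_i+1)$, and the hypothesis $\lambda_i\ge\lambda_b\ge\frac{2b-5}{2b-4}\lambda_1$ for $i>m$ makes this exceed $\ceil{b\lambda_m/2}$ for all $b\ge 5$, so $G_\mu$ and $G_\lambda$ agree on the initial range. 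For the final segment, $\lambda_{b-1}\ge\lambda_b\ge\frac{2b-5}{2b-4}\lambda_1$ and the elementary check $\lambda_b\ge|\lambda|/(b+1)$ (which follows from $|\lambda|\le b\lambda_1$ and $(2b-5)(b+1)\ge b(2b-4)$ for $b\ge 5$) verify the hypotheses of Proposition~\ref{equaldecreasingstrong}, giving weak decrease from $q^{\floor{b\lambda_1/2}}$.

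For the middle concavity, generalizing Proposition~\ref{5partsequalconcave}, I apply $(1-q)^2$ to the expansion~\eqref{Ggenfunc} and extract the leading $\lambda_1^{b-3}$ asymptotic via Lemma~\ref{quasipolyslemma}. Under the near-rectangular hypothesis, the $q$-exponent $\sum_k f^\lambda_A(k)$ of the $A$-term is approximately $(b-\min(A)+1)\lambda_1$, so for $n$ in the middle interval only the subsets $A\subseteq\{\ceil{b/2}+1,\ldots,b\}$ contribute to the leading order, and one obtains
\[
[q^n](1-q)^2 G_\lambda(q) = \lambda_1^{b-3}\,\alpha(x_2,\ldots,x_m,y)\pm O(\lambda_1^{b-4})
\]
for a polynomial $\alpha$ in $y = n/\lambda_1$ and $x_i = \lambda_i/\lambda_1$ depending continuously on the $x_i$. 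At the rectangle point $(x_i=1,\,y=b/2)$, the same leading coefficient can be computed through the Gaussian identity $G_{(\lambda_1)^b}(q) = \prod_k(1-q^{\lambda_1+k})/\prod_k(1-q^k)$: expanding the numerator as $\sum_S (-1)^{|S|}q^{|S|\lambda_1 + \sum_{k\in S}k}$ and applying Lemma~\ref{quasipolyslemma} to each shift, the leading contribution at $y=b/2$ collapses to $T(b)/[b!(b-3)!\,2^{b-3}]$, which by Lemma~\ref{nonzeroaltsum} is nonzero and by the strict unimodality of $\binom{b+\lambda_1}{b}_q$ at its peak is negative. Continuity of $\alpha$ and compactness of the parameter region then yield $\epsilon > 0$ and $c > 0$ with $\alpha \le -c$ throughout $|x_i-1|\le\epsilon$ and $y\in[bx_m/2, b/2]$; taking $\lambda_1$ sufficiently large to dominate the $O(\lambda_1^{b-4})$ error gives concavity on the middle interval, which together with the increasing and decreasing segments proves unimodality.

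The hardest step is verifying that the Proposition~\ref{Gigenfunc} expansion, restricted to the relevant subsets $A$, produces the same leading $\lambda_1^{b-3}$ coefficient at the rectangle point as the Gaussian expansion above. In other words, one must match the combinatorial bookkeeping of the $2^{\floor{b/2}}$ relevant terms from Proposition~\ref{Gigenfunc} against the alternating sum over $\{0,\ldots,\floor{b/2}\}$ appearing in $T(b)$, where the $(b-2i)^{b-3}$ factor emerges as the $(b-3)$-fold discrete difference of the partition-function polynomial evaluated at shifts $i\lambda_1$. Once this identification and the sign are in hand, the quasipolynomial asymptotics and compactness arguments are routine.
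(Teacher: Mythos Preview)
Your three-stage plan matches the paper's proof, and your handling of the decreasing tail via Proposition~\ref{equaldecreasingstrong} and the passage to the continuous function $\alpha$ is essentially the same. Two remarks.

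Your auxiliary $\mu$ for the initial increasing segment is wrong for $b\ge 6$. With $\mu_i=\lambda_m$ for \emph{all} $i\ge 2$ (where $m=\floor{b/2}$), you may have $\mu_i<\lambda_i$ for $2\le i<m$, so there can be partitions lying in $\lambda$ but not in $\mu$ of size as small as $2(\lambda_m+1)$, well below $b\lambda_m/2$. You only checked the direction ``in $\mu$ but not in $\lambda$'', so your claimed agreement of $G_\mu$ and $G_\lambda$ on the initial range is false. The paper instead takes $\mu=(\lambda_1,\ldots,\lambda_m,\lambda_m,\ldots,\lambda_m)$, which contains $\lambda$; then only the one direction you analyzed is relevant, and your inequality $(\floor{b/2}+1)\lambda_b\ge \frac{b}{2}\lambda_1\ge\frac{b}{2}\lambda_m$ (valid exactly for $b\ge 5$) finishes the step.

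On the evaluation of $\alpha(1,\ldots,1,b/2)$, your Gaussian-product route is a genuine alternative to the paper's. The paper stays inside the expansion~\eqref{Ggenfunc}: it groups the contributing $A$ by $\min(A)$ and then uses the formal identity
\[
\sum_{\substack{s_1,\ldots,s_\ell\ge 1\\ s_1+\cdots+s_\ell=a}}\prod_j\frac{-1}{s_j!}=\frac{(-1)^a}{a!}
\]
(the coefficient identity for $e^{-x}$) to collapse the double sum to $T(b)$. Your approach---expand $\prod_{k=1}^b(1-q^{\lambda_1+k})$ and apply Lemma~\ref{quasipolyslemma} shift by shift---reaches $T(b)$ with less bookkeeping. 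Moreover, the step you flag as hardest, matching the \eqref{Ggenfunc} bookkeeping against the Gaussian expansion, is unnecessary: both are computing the leading $\lambda_1^{b-3}$ asymptotic of the \emph{same} sequence $[q^n](1-q)^2\binom{\lambda_1+b}{b}_q$, so they agree automatically. You may use the Gaussian computation to obtain $\alpha(1,\ldots,1,b/2)=T(b)/\bigl(b!(b-3)!\,2^{b-3}\bigr)$ directly, and then invoke only the continuity of $\alpha$ (which is what the expansion~\eqref{Ggenfunc} supplies for general near-rectangular $\lambda$) with no further matching.
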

\begin{proof}
Fix $b$. If we let $\mu=(\lambda_1,\ldots,\lambda_{\floor{b/2}},\lambda_{\floor{b/2}},\ldots,\lambda_{\floor{b/2}})$, then we know by Proposition~$\ref{equaldecreasingstrong}$ that $G_\mu$ has increasing coefficients up to $q^{\frac{b}{2}\lambda_{\floor{b/2}}}$. Since the coefficients of $G_\mu$ and $G_\lambda$ agree up to at least $q^{(\floor{b/2}+1)\lambda_b}$ and 
\[
\left(\floor*{\frac{b}{2}}+1\right)\lambda_b\ge\frac{b+1}{2}\frac{2b-5}{2b-4}\lambda_1=\frac{b}{2}\frac{b+1}{b}\frac{2b-5}{2b-4}\lambda_1=\frac{b}{2}\left(\frac{2b^2-3b-5}{2b^2-4b}\right)\lambda_1\ge\frac{b}{2}\lambda_1\ge\frac{b}{2}\lambda_{\floor{b/2}},
\]
we get that $G_\lambda$ has increasing coefficients up to $q^{\frac{b}{2}\lambda_{\floor{b/2}}}$.
Also, Proposition~$\ref{equaldecreasingstrong}$ tells us that $G_\lambda$ has decreasing coefficients from $q^{\frac{b}{2}\lambda_1}$ onward. Thus all we need to do is prove concavity of coefficients between $q^{\frac{b}{2}\lambda_{\floor{b/2}}}$ and $q^{\frac{b}{2}\lambda_b}$. For this, we look at $[q^n](1-q)^2G_\lambda(q)$ for $\frac{b}{2}\lambda_{\floor{b/2}}\le n\le \frac{b}{2}\lambda_b$.
Note that the only term in \eqref{Ggenfunc} that does not have $(1-q)^2$ in the denominator is $\prod_{k=1}^{b} \frac{1}{1-q^k}$. For that term, we use $\frac{(1-q)^2}{(1-q)(1-q^2)(1-q^3)(1-q^4)(1-q^5)}=\frac{1-q+q^2}{(1-q^6)(1-q^4)(1-q^5)}$. Thus $(1-q)^2G_\lambda(q)$ will have only terms of the form covered by Lemma~\ref{quasipolyslemma}.

Using $h(A,\lambda)$ to denote $\sum_{k=1}^{b} f^\lambda_A(k)$, we obtain
\bal
[q^n](1-q)^2G_{\lambda}(q)&=[q^n](1-q)^2\sum_{A\subseteq [b]}\left((-1)^{|A|}q^{h(A,\lambda)}\prod_{k=1}^{b} \frac{1}{1-q^{g_A(k)}}\right)\\
													&=[q^n](1-q)^2\sum_{\substack{A\subseteq [b]\\ h(A,\lambda)\le n}} \left((-1)^{|A|}q^{h(A,\lambda)}\prod_{k=1}^{b} \frac{1}{1-q^{g_A(k)}}\right)\\
													&=\sum_{\substack{A\subseteq [b]\\ h(A,\lambda)\le n}} \left((-1)^{|A|}\left(\prod_{k=1}^b\frac{1}{g_A(k)}\right)\binom{n-h(A,\lambda)}{b-3}+O_b(n^{b-4})\right)\\
													&=\alpha\left(\frac{\lambda_2}{\lambda_1},\ldots,\frac{\lambda_{\floor{b/2}}}{\lambda_1},\frac{n}{\lambda_1}\right)n^{b-3}+O_b(n^{b-4})\\
\eal
where $\alpha$ is some continuous function. Note that in going from line 2 to line 3 in the display, we have to treat the case of $A=\emptyset$ separately.

Now, whenever $\alpha\left(1,\ldots,1,\frac{b}{2}\right)<0$, we can use continuity to obtain some $\epsilon>0$ so that for any $\lambda$ and $n$ with $\lambda_{\floor{b/2}}\ge(1-\epsilon)\lambda_1$ and $\frac{b}{2}\lambda_{\floor{b/2}}\le n\le\frac{b}{2}\lambda_1$, we get $\alpha(\frac{\lambda_2}{\lambda_1},\ldots,\frac{\lambda_{\floor{b/2}}}{\lambda_1},\frac{n}{\lambda_1})<\alpha(1,\ldots,1,\frac{b}{2})/2<0$. This would then tell us, when $\lambda_1\gg b$, that the coefficients of $G_{\lambda}(q)$ are concave from $q^{(b/2)\lambda_{\floor{b/2}}}$ to $q^{(b/2)\lambda_1}$. Thus all we need to do is show that $\alpha\left(1,\ldots,1,\frac{b}{2}\right)<0$.

We let $\lambda=(\lambda_1,\ldots,\lambda_1)$. Then $h(A,\lambda)=(b-\min(A)+1)(\lambda_1+1)$ for $A\ne\emptyset$ and $h(\emptyset,\lambda)=0$. Thus

\bal
[q^n]&(1-q)^2G_{\lambda}(q)=\\
                          &=\sum_{\substack{A\subseteq [b]\\ h(A,\lambda)\le \frac{b}{2}\lambda_1}} \left((-1)^{|A|}\left(\prod_{k=1}^b\frac{1}{g_A(k)}\right)\binom{n-h(A,\lambda)}{b-3}+O_b(n^{b-4})\right)\\
                          &=\left(\prod_{k=1}^b\frac{1}{g_\emptyset(k)}\right)\binom{n}{b-3}+\sum_{m=\ceil{\frac{b+3}{2}}}^b\sum_{\substack{A\subseteq [b]\\ \min(A)=m}}\left((-1)^{|A|}\left(\prod_{k=1}^b\frac{1}{g_A(k)}\right)\binom{n-h(A,\lambda)}{b-3}\right)+O_b(n^{b-4})\\
													&=\frac{1}{b!}\binom{\frac{b}{2}\lambda_1}{b-3}+\sum_{m=\ceil{\frac{b+3}{2}}}^b\sum_{\substack{s_1,\ldots,s_\ell\ge 1\\\sum s_j=b-m+1}}\left(\frac{1}{(b-1)!}\prod_{j=0}^\ell\frac{1}{s_j!}\right)\binom{\frac{b}{2}\lambda_1-(b-m+1)(\lambda_1+1)}{b-3}+O_b(\lambda_1^{b-4})\\
													&=\frac{1}{(b-3)!}\Bigg(\frac{1}{b!}\left(\frac{b}{2}\lambda_1\right)^{b-3}+\\
													&\text{\hspace{30 pt}}+\sum_{m=\ceil{\frac{b+3}{2}}}^b\sum_{\substack{s_1,\ldots,s_\ell\ge 1\\\sum s_j=b-m+1}}\left(\frac{1}{(m-1)!}\prod_{j=0}^\ell\frac{-1}{s_j!}\right)\left(\left(\frac{b}{2}-(b-m+1)\right)\lambda_1\right)^{b-3}\Bigg)+O_b(\lambda_1^{b-4})\\
													&=\frac{\lambda_1^{b-3}}{(b-3)!2^{b-3}}\left(\frac{1}{b!}b^{b-3}+\sum_{m=\ceil{\frac{b+3}{2}}}^b\sum_{\substack{s_1,\ldots,s_\ell\ge 1\\\sum s_j=b-m+1}}\left(\frac{1}{(m-1)!}\prod_{j=0}^\ell\frac{-1}{s_j!}\right)\left(2(m-1)-b\right)^{b-3}\right)+O_b(\lambda_1^{b-4}).\\
\eal
where $\displaystyle\sum_{\substack{s_1,\ldots,s_\ell\ge 1\\\sum s_j=x}}$ denotes a sum over compositions of $x$.

We note the equality of formal power series 
\bal
\sum_{a=0}^\infty \left(\sum_{\substack{s_1,\ldots,s_\ell\ge 1\\\sum s_j=a}}\prod_{j=0}^\ell\frac{-1}{s_j!}\right)x^a&=\frac{1}{1-\left(\sum_{s=1}^\infty\frac{-1}{s!}x^s\right)}\\
                                                                               &=\frac{1}{1-(1-e^x)}\\
                                                                               &=e^{-x}\\
																																							 &=\sum_{a=0}^\infty \frac{(-1)^a}{a!}x^a,\\
\eal
so
\begin{equation}\label{simplifytosanify}
\sum_{\substack{s_1,\ldots,s_\ell\ge 1\\\sum s_j=a}}\prod_{j=0}^k\frac{-1}{s_j!}=\frac{(-1)^a}{a!}.
\end{equation}
Substituting \eqref{simplifytosanify} in, we get
\bal
[q^n]&(1-q)^2G_{\lambda}(q)=\\
                          &=\frac{\lambda_1^{b-3}}{(b-3)!2^{b-3}}\left(\frac{1}{b!}b^{b-3}+\sum_{m=\ceil{\frac{b+3}{2}}}^b\left(\frac{1}{(m-1)!}\frac{(-1)^{b-m+1}}{(b-m+1)!}\right)\left(2(m-1)-b\right)^{b-3}\right)+O_b(\lambda_1^{b-4})\\
                          &=\frac{\lambda_1^{b-3}}{b!(b-3)!2^{b-3}}\left(b^{b-3}+\sum_{m=\ceil{\frac{b+3}{2}}}^b\left(\binom{b}{b-m+1}(-1)^{b-m+1}\right)\left(2(m-1)-b\right)^{b-3}\right)+O_b(\lambda_1^{b-4})\\
													&=\frac{\lambda_1^{b-3}}{b!(b-3)!2^{b-3}}\sum_{i=0}^{\floor{(b-1)/2}}(-1)^i(b-2i)^{b-3}\binom{b}{i}+O_b(\lambda_1^{b-4})\\
													&=\frac{\lambda_1^{b-3}}{b!(b-3)!2^{b-3}}\sum_{i=0}^{\floor{b/2}}(-1)^i(b-2i)^{b-3}\binom{b}{i}+O_b(\lambda_1^{b-4})
\eal
so
\[
\alpha\left(1,\ldots,1,\frac{b}{2}\right)=\frac{\lambda_1^{b-3}}{b!(b-3)!2^{b-3}}\sum_{i=0}^{\floor{b/2}}(-1)^i(b-2i)^{b-3}\binom{b}{i}.
\]
This value is not 0 by Lemma~\ref{nonzeroaltsum}. Note that if $\alpha(1,\ldots,1,\frac{b}{2})>0$, then when $\lambda=(\lambda_1,\ldots,\lambda_1)$ and $\lambda_1\gg b$, we get that the coefficients of $G_\lambda$ are strictly convex near the middle, which contradicts its unimodality since $(\lambda_1,\ldots,\lambda_1)$ is a rectangular partition. Thus $\alpha(1,\ldots,1,\frac{b}{2})<0$ and the theorem is proved.
\end{proof}

\section{Conjectures}\label{sec: conjectures}
We used the generating functions \eqref{Ggenfunc} and \eqref{Fgenfunc} to test unimodality of partitions with a small number of parts, leading us to pose the following conjecture.
\begin{conjecture}\label{mostunimodal}
If $\lambda_1\ge b>4$\footnote{An earlier version of this paper incorrectly omitted the condition $\lambda_1\ge b$. Thank you to Henry Cohn for pointing this out. Note that $\lambda_1\ge b$ always holds for $F_\lambda$ and holds without loss of generality for $G_\lambda$ since taking the transpose of $\lambda$ does not change the generating function.}, then $F_\lambda$ and $G_\lambda$ are unimodal except for finitely many exceptions when $b=6$.
\end{conjecture}
 The maximal exception for $G_\lambda$ appears to be $(10,9,9,9,9,9)$, while the maximal exception for $F_\lambda$ appears to be $(19,18,17,16,15,14)$. We have used a computer to test this for all partitions with 5 parts with $\lambda_1\le 200$, with 6 parts with $\lambda_1\le 100$, with 7 parts with $\lambda_1\le 70$, with 8 parts with $\lambda_1\le 50$, with 9 parts with $\lambda_1\le 40$, and with 10 parts with $\lambda_1\le 30$.

Conjecture~\ref{mostunimodal} would also tend to be supported by Theorem~\ref{bnearrectunimodal}. However, it seems very hard to prove.

Furthermore, the cases where $F_\lambda$ or $G_\lambda$ is not unimodal still seem to be well-behaved:
\begin{conjecture}\label{nearlyunimodal}
If $F_\lambda$ or $G_\lambda$ is not unimodal, then there is some integer $n$ so that its coefficient sequence is increasing up to $q^{2n}$ and decreasing from $q^{2n+2}$.
\end{conjecture}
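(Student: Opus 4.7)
Since this is a conjecture, the proposal is necessarily speculative. The plan is to exploit the symmetry of $G_\lambda$ together with a refined version of the concavity analysis of Section~\ref{sec: concave}, and to handle $F_\lambda$ by reduction to $G_\lambda$ via the identity~\eqref{Fgenfunc}.

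First I would use the symmetry $a_k = a_{|\lambda|-k}$, which translates into antisymmetry $\Delta_k = -\Delta_{|\lambda|+1-k}$ for the difference sequence $\Delta_k = a_k - a_{k-1}$. The conjecture's conclusion is that $\Delta_k$ is nonnegative for $k \le 2n$, nonpositive for $k \ge 2n+3$, with $\Delta_{2n+1}$ and $\Delta_{2n+2}$ unrestricted. Combined with antisymmetry, this ``free window'' is forced to sit centered at $(|\lambda|+1)/2$, so the conjecture reduces to the claim that any violation of unimodality is confined to a single pair of consecutive $\Delta$-indices adjacent to the center of symmetry.

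Next, I would use Propositions~\ref{equalincreasingweak} and~\ref{equaldecreasingstrong} to establish monotonicity of $\Delta_k$ on long initial and terminal segments of the sequence; this already reduces the problem to a bounded central window depending on $\lambda_{\floor{b/2}}$ and $\lambda_1$. Within that window, I would try to sharpen the second-difference estimate used in Proposition~\ref{5partsequalconcave} and Theorem~\ref{bnearrectunimodal}: the quasipolynomial expansion of $(1-q)^2 G_\lambda(q)$ obtained from~\eqref{Ggenfunc} should, with careful sign tracking, show that at most two consecutive coefficients of $(1-q)G_\lambda(q)$ can fail the unimodal sign pattern, and those two must be adjacent to the center. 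For $F_\lambda$, which lacks the clean symmetry of $G_\lambda$, I would then use~\eqref{Fgenfunc} to rewrite $F_\lambda$ as a sum of shifted $G$-type generating functions and obtain the analogous statement by a boundary-term estimate.

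The main obstacle is the gap between the macroscopic quasipolynomial picture (uniform but crude) and the delicate arithmetic cancellations, like the mod-$12$ phenomenon in Section~\ref{sec: 4parts}, that actually produce non-unimodality. A complete proof seems to require a uniform bound on the number of sign changes of $(1-q)G_\lambda(q)$ that holds even for highly skew $\lambda$ well outside the near-rectangular regime, and I would expect this to need either a representation-theoretic realization of $G_\lambda$ (for example, as the character of a module whose Newton polygon has the claimed shape) or a sign-reversing involution on the signed expansion~\eqref{Ggenfunc} of $(1-q^2)G_\lambda(q)$; neither tool is developed in the paper, and this is where I expect the real difficulty of the conjecture to lie.
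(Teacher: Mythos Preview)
The paper offers no proof of this statement; it is stated as a conjecture, and the paper explicitly remarks that even in the restricted $b=4$ near-rectangular regime, the second-difference negativity facts one can establish ``are still insufficient to prove Conjecture~\ref{nearlyunimodal} even in this limited case.'' So there is no paper proof to compare against, only the partial remarks following the conjecture.

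Your proposal, however, contains a genuine error at its first step. You invoke ``the symmetry $a_k = a_{|\lambda|-k}$'' for $G_\lambda$, but $G_\lambda$ is \emph{not} symmetric unless $\lambda$ is a rectangle: already $G_{(2,1)}(q) = 1 + q + 2q^2 + q^3$ fails it. The complement of $\mu\subseteq\lambda$ inside the Ferrers diagram of $\lambda$ need not itself be a partition, so there is no size-reversing involution on the poset in general. Consequently your antisymmetry of $\Delta_k$ and your reduction to ``a single pair of consecutive $\Delta$-indices adjacent to the center of symmetry'' both collapse; nothing in the conjecture pins the window to any center. Your later contrast, that $F_\lambda$ ``lacks the clean symmetry of $G_\lambda$,'' shows you are treating this symmetry as established when it does not hold.

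The remainder of your outline --- using Propositions~\ref{equalincreasingweak} and~\ref{equaldecreasingstrong} for the tails and a second-difference quasipolynomial analysis in the middle --- is broadly in the spirit of the paper's own partial remarks, which look at $(1-q)^2 F_\lambda$ and $(1-q)(1-q^2)F_\lambda$ near the suspected bad index. But those propositions carry near-rectangular hypotheses, whereas the conjecture is for arbitrary $\lambda$. Your closing concession that the real difficulty lies beyond these tools is accurate and matches the paper's own ``still insufficient'' verdict.
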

Note that this conjecture for $G_\lambda$ is equivalent to Observations 1-3 of \cite{Stanton1990}, while for $F_\lambda$ it contradicts a conjecture in \cite{StanleyZanello2013}.

For the $b=4$, $\lambda_4\approx\lambda_1$ case, an analysis similar to Theorem~\ref{5nearrectunimodal} shows that $[q^n](1-q)(1-q^2)F_\lambda(q)<0$ when $n\approx 2\lambda_1$ and that $[q^n](1-q)^2F_\lambda(q)<0$ when $n\approx 2\lambda_1$ and $n$ is odd. However, these two facts together are still insufficient to prove Conjecture~\ref{nearlyunimodal} even in this limited case.

We note that \eqref{Fgenfunc} can be used to get analogues of Proposition~\ref{equaldecreasingstrong} as well as a concavity result near $b=\frac{b}{2}\lambda_1$. Some sort of analogue of Proposition~\ref{equalincreasingweak} could then be used to prove the next conjecture.
\begin{conjecture}\label{bnearrectdistinctunimodal}
For every integer $b\ge 5$, there exists $\epsilon>0$ such that if $\lambda$ is a partition with $b$ distinct parts satisfying $\lambda_1\gg b$ and $\lambda_b>(1-\epsilon)\lambda_1$, then $F_\lambda$ is unimodal.
\end{conjecture}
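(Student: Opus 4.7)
The plan is to mirror the proof of Theorem~\ref{bnearrectunimodal} in the distinct-parts setting: for $\lambda$ with $b$ distinct parts satisfying $\lambda_b > (1-\epsilon)\lambda_1$ and $\lambda_1 \gg b$, I would show that the coefficients of $F_\lambda$ are (i) weakly increasing on $[0, P_1]$, (ii) concave on $[P_1, P_2]$, and (iii) weakly decreasing on $[P_2, \infty)$ for thresholds $P_1 \le P_2$ with $P_2$ near $b\lambda_1/2$, from which unimodality follows by the usual combination argument.

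For (iii) I would establish the $F$-analog of Proposition~\ref{equaldecreasingstrong} by induction on the number of indices where $\lambda_i < \lambda_1 - i + 1$, using the shifted staircase $(\lambda_1, \lambda_1-1, \ldots, \lambda_1-b+1)$ as the ``smallest enclosing distinct shape'' in place of the smallest enclosing rectangle. The base case is that $F$ of a shifted staircase is unimodal, which is Alpoge's theorem for $\lambda_1 \gg b$; the inductive step should follow the template of Proposition~\ref{equaldecreasingstrong} with adjustments for the distinct-parts constraint. For (ii) I would use the decomposition $F_\lambda = 1 + \sum_{c=1}^b q^{\binom{c+1}{2}} G_{\lambda^{(c)}}$ from~\eqref{Fgenfunc}: by Lemma~\ref{quasipolyslemma}, each term contributes at most $O(\lambda_1^{c-3})$ to $[q^n](1-q)^2 F_\lambda$, so for $\lambda_1 \gg b$ the $c = b$ term dominates, and since $\lambda_b > (1-\epsilon)\lambda_1$ for $\epsilon < 1/(2b-4)$ makes $\lambda^{(b)}$ satisfy the hypotheses of Theorem~\ref{bnearrectunimodal}, the dominant contribution is negative on the middle range by the analysis there.

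The main obstacle, as the paper itself notes, is (i). The distinct-parts constraint is global, so the inner-rectangle argument of Proposition~\ref{equalincreasingweak} does not transfer directly: there is no obvious structure $X \subseteq \lambda$ whose ``completion'' generating function for distinct-parts partitions is symmetric unimodal. My first approach is a comparison with the shifted staircase: setting $\mu = (\lambda_1, \lambda_1 - 1, \ldots, \lambda_1 - b + 1) \supseteq \lambda$, a direct count shows $F_\mu$ and $F_\lambda$ agree through degree roughly $\min_{i\ge 2}(i\lambda_i + \binom{i+1}{2}) \approx 2\lambda_2$ (the minimal size of a distinct-parts partition in $\mu$ but not in $\lambda$), while Alpoge's theorem gives $F_\mu$ unimodal with peak near $b\lambda_1/2$, yielding $F_\lambda$ weakly increasing up to $P_1 \approx 2\lambda_2$. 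For the three-piece argument to close, the concavity region from (ii) must extend leftward to meet $P_1$, which amounts to checking that the polynomial analogous to $\alpha$ in Proposition~\ref{5partsequalconcave} remains negative well below its midpoint $b/2$; this is true for $b=5$ by direct computation (the $\alpha$ of Proposition~\ref{5partsequalconcave} is in fact negative on a much wider interval than the one used there), but a uniform-in-$b$ verification would require additional polynomial analysis. A more robust alternative would be to construct a genuine analog of Proposition~\ref{equalincreasingweak} by translating an inner rectangle in $\lambda^{(b)}$ back, via the bijection $\tilde\mu_i = \mu_i - (b-i+1)$, to a distinct-staircase inner structure $(\lambda_b + b - 1, \ldots, \lambda_b) \subseteq \lambda$, combined with a separate treatment of partitions with fewer than $b$ parts.
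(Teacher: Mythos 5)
Conjecture~\ref{bnearrectdistinctunimodal} is exactly that---a conjecture. The paper gives no proof and explicitly flags the missing ingredient (``Some sort of analogue of Proposition~\ref{equalincreasingweak} could then be used to prove the next conjecture''), so there is no paper argument for yours to match. Your sketch of (ii) and (iii) does follow the route the paper suggests via~\eqref{Fgenfunc}; one small caution on (iii): Alpoge's theorem gives only unimodality of $F_\mu$ for the staircase, and $F_\mu$ is \emph{not} symmetric when $\lambda_1>b$ (e.g.\ $\lambda=(4,3)$ gives $1+q+q^2+2q^3+2q^4+2q^5+q^6+q^7$), so unimodality alone does not locate the peak near $b\lambda_1/2$. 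It is cleaner, as the paper hints, to apply Proposition~\ref{equaldecreasingstrong} to each $G_{\lambda^{(c)}}$ in the decomposition~\eqref{Fgenfunc} and observe that each shifted summand is weakly decreasing from $\binom{c+1}{2}+\lfloor c\lambda^{(c)}_1/2\rfloor$ onward.

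The genuine gap is (i), and you correctly name it, but neither of your two fallbacks closes it. Approach~1 asks for concavity on all of $[2\lambda_2,\,\frac{b}{2}\lambda_{\lfloor b/2\rfloor}]$, an interval of width about $(\frac{b}{2}-2)\lambda_1$ in the near-rectangular regime; but $[q^n](1-q)^2G_\mu$ for a rectangle or near-rectangle is asymptotically $+n^{b-3}/(b!(b-3)!)$ when $n$ is well below $\lambda_1$, so the coefficient sequence is genuinely convex away from the middle and the sign cannot be forced negative all the way down to $2\lambda_1$ for general $b$ (it happens to work for $b=5$, where the interval is short). Approach~2 is closer to the right idea but the phrase ``separate treatment of partitions with fewer than $b$ parts'' hides the whole difficulty: in the decomposition $F_\lambda=1+\sum_c q^{\binom{c+1}{2}}G_{\lambda^{(c)}}$, the $c=2$ summand is symmetric with peak near $\lambda_1$ and \emph{decreases} thereafter, and more generally the $c<b$ summands peak near $\frac{c}{2}\lambda_1<\frac{b}{2}\lambda_1$. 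A naive ``each summand increases'' argument therefore only gives increase up to roughly $\lambda_1$---worse than Approach~1. What is actually needed is a quantitative dominance estimate: for $\lambda_1\gg b$ and $n$ between $\lambda_1$ and $\frac{b}{2}\lambda_{\lfloor b/2\rfloor}$, the forward difference of the $c=b$ term is $\Theta_b(n^{b-2})$ while each $c<b$ term contributes only $O_b(n^{c-2})$, so the sum still increases. This is plausible and likely provable by the quasipolynomial machinery of Lemma~\ref{quasipolyslemma}, but it is a new argument, not a translation of Proposition~\ref{equalincreasingweak}, and it needs care precisely where the $c=b$ term's forward difference becomes small (near its own center), which is where it must be handed off to the concavity window. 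Until that estimate is written, the proof is incomplete.
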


Other classes of partitions that are likely to be approachable with the methods in this paper are partitions whose Ferrers diagram is the union of the Ferrers diagrams of $\lambda$ and the transpose of $\mu$, where $\lambda$ and $\mu$ are covered by Theorem~\ref{bnearrectunimodal}, and partitions whose parts are close to small multiples of some integer $a\gg b$.

Finally, there is a particularly interesting class of partitions for this problem: those whose parts form an arithmetic progression where $\lambda_b$ is at most the common difference (see Conjecture 2 of \cite{Stanton1990} and Conjectures 3.5 and 3.10 of \cite{StanleyZanello2013}).

\section{Acknowledgments}
This research was conducted as part of the University of Minnesota Duluth REU program, supported by NSA grant H98230-13-1-0273 and NSF grant 1358659. I would like to thank Joe Gallian for his advice and support. I would also like to thank participants of the REU program, particularly Alex Lombardi, for discussions about Lemma~$\ref{nonzeroaltsum}$.
\bibliographystyle{plain}
\bibliography{zbarskybib}
\end{document}